\documentclass{amsart}
\usepackage{amscd,amsfonts,amssymb,amsmath}
\usepackage[margin=3.8 cm]{geometry}

\usepackage{amsmath}

\newtheorem{theorem}{Theorem}[section]
\newtheorem{lemma}[theorem]{Lemma}
\theoremstyle{definition}

\newtheorem{prop}[theorem]{Proposition}
\theoremstyle{remark}

\numberwithin{equation}{subsection}
\theoremstyle{plain}

\newtheorem{cor}{Corollary}

\newtheorem{problem}{Problem}

\newtheorem*{theorema}{Theorem \ref{nn2}}

\def\Z{\mathbb Z}
\def \N{{\rm N}}
\def \n {{\widetilde{\rm N}} }
\def \l {l_{\mathcal{P}}}
\def \pw{{\rm pw}}

\newcommand{\secref}[1]{Section~\ref{#1}}
\newcommand{\thmref}[1]{Theorem~\ref{#1}}
\newcommand{\lemref}[1]{Lemma~\ref{#1}}

\newcommand{\corref}[1]{Corollary~\ref{#1}}
\newcommand{\probref}[1]{Problem~\ref{#1}}
\newcommand{\eqnref}[1]{~{\textrm(\ref{#1})}}
\numberwithin{equation}{section}

\begin{document}
\title[ON PALINDROMIC WIDTH]% OF  CERTAIN EXTENSIONS AND QUOTIENTS OF NILPOTENT GROUPS]
{ON PALINDROMIC WIDTH OF  CERTAIN EXTENSIONS AND QUOTIENTS OF FREE NILPOTENT GROUPS}

\author{Valeriy ~G.~Bardakov}
\address{Sobolev Institute of Mathematics, Novosibirsk State University, Novisibirsk 630090  }
\address{and} 
 \address{Laboratory of Quantum Topology, Chelyabinsk State University, Brat’evKashirinykh street 129, Chelyabinsk 454001, Russia}
\email{bardakov@math.nsc.ru}
\author{Krishnendu Gongopadhyay}
\address{Department of Mathematical Sciences, Indian Institute of Science Education and Research (IISER) Mohali,
Knowledge City, Sector 81, S.A.S. Nagar, P.O. Manauli 140306, India}
\email{krishnendu@iisermohali.ac.in, krishnendug@gmail.com}
%\thanks{Gongopadhyay acknowledges the support of SERC-DST FAST  grant
%{SR/FTP/MS-004/2010}. }
\subjclass[2000]{Primary 20F65; Secondary 20D15, 20F18, 20F19}
\keywords{palindromic width, free nilpotent groups}

\thanks{The authors gratefully acknowledge the support of the Indo-Russian DST-RFBR project grant DST/INT/RFBR/P-137}
\thanks{Bardakov is partially supported by Laboratory of Quantum Topology of Chelyabinsk State University (Russian Federation government grant 14.Z50.31.0020) } 

\date{March 13, 2014}

%\tableofcontents

\begin{abstract}
In \cite{BG} the authors provided a bound for the palindromic widths of free abelian-by-nilpotent group $AN_n$
of rank $n$ and free nilpotent group
${\rm N}_{n,r}$ of rank $n$ and step $r$.
In the present paper we study palindromic widths of groups $\widetilde{AN}_n$ and $\widetilde{\rm N}_{n,r}$.
We denote by $\widetilde{G}_n = G_n / \langle \langle x_1^2, \ldots, x_n^2 \rangle \rangle$ the quotient of
group $G_n = \langle x_1, \ldots, x_n \rangle$, which is free
in some variety by  the normal subgroup generated by
$x_1^2, \ldots, x_n^2$.
We prove that the palindromic width of the quotient $\widetilde{AN}_n$ is finite and bounded by $3n$.
We also prove that the palindromic width of the quotient
$\widetilde{\rm N}_{n, 2}$ is precisely $2(n-1)$. As a corollary to this result, 
we improve the lower bound of the palindromic width of $\N_{n, r}$.  We also improve the bound of the palindromic width of a free metabelian group. We prove that the palindromic width of a free metabelian group of rank $n$ is at most  $4n-1$. 
\end{abstract}
\maketitle
\section{Introduction}

Let $X$ be a set of generators of a group $G$.  A reduced word $w$ in the
alphabet $X^{\pm 1}$ is called a {\it palindrome} if $w$ reads the same
left-to-right and right-to-left. An element $g$ of $G$ is called a
\emph{palindrome} if $g$ can be represented by some word $w$ that is a
palindrome in the alphabet $X^{\pm 1}$. We denote the set of all palindromes in
$G$ by $\mathcal{P} = \mathcal{P}(X)$. Evidently,  the set $\mathcal{P}$
generates $G$. Then any element $g \in G$ is a product of palindromes
$$
g = p_1 p_2 \ldots p_k.
$$
The minimal $k$ with this property is called the \emph{palindromic length} of $g$ and
is denoted by $l_{\mathcal{P}}(g)$.  The \emph{palindromic width} of $G$ is given by
$$
{\rm pw}(G,X) = {\rm wid}(G, \mathcal{P}) = \underset{g \in G}{\sup} \ l_{\mathcal{P}}(g).
$$
When there is no confusion about the underlying set of generators $X$, the palindromic width with respect to $X$ is simply denoted by $\pw(G)$. 
In analogy with commutator width of groups,
it is an interesting  problem  to study palindromic width of groups.
Palindromes in groups have been investigated by several people and it has already been
useful in studying various aspects of combinatorial group theory and geometry, for example see
\cite{BST, BT}, \cite{col}--\cite{kr},  \cite{P}.

\medskip Our primary aim in this article is to investigate the
following problem:  Let $G$ be a group that is free in some variety $\mathcal G$ of groups.
Let $G=F_n(\mathcal G)$ has rank $n$. Let $X=\{x_1, \ldots, x_n\}$ be a basis of $G$.
Define a group $\widetilde G$ that is the quotient group of $G$ by the relations $x_i^2=1$, $i=1, \ldots, n$. That is
$$\widetilde G=G/\langle \langle x_1^2, \ldots, x_n^2 \rangle \rangle,$$
where $\langle \langle x_1^2, \ldots, x_n^2 \rangle \rangle$ denotes the normal closure of the set
$\{x_1^2, \ldots, x_n^2\}$. Let $Y=\{y_1, \ldots, y_n\}$ be the image of the generating set
$X = \{x_1, \ldots, x_n\}$ in $\widetilde G$. Then $\widetilde G = \langle Y \rangle$.
\begin{problem}\label{pr1}
What is the value of  $\pw(\widetilde G, Y)$?
\end{problem}

 The main motivation for us to ask the \probref{pr1} was to improve the lower bound of $\pw(\N_{n,r})$. There is an epimorphism from $G$ to $\tilde G$ and hence it follows that {$\pw(\tilde G,Y) \leq \pw(G,X)$}, see \lemref{onto} below. So obtaining precise value of $\pw(\tilde G, Y)$ would give a lower bound to the palindromic width of $G$. 

   \medskip Let ${\rm N}_{n, r}$ be the free nilpotent group of rank $n$ and of step $r$. Let $X$ be a basis of ${\rm N}_{n, r}$.  Investigation to obtain precise value of the palindromic width of ${\rm N}_{n, r}$ has been initiated by the authors in \cite{BG}. The authors have provided a bound for the palindromic width of ${\rm N}_{n, r}$: for $n>1$ and $r \geq 2, \ n \leq  {\rm pw}({\rm N}_{n, r}, X) \leq 3n$, see \cite[Theorem 1.1]{BG}. For ${\rm N}_{n,1}$ and ${\rm N}_{2,2}$ precise values of the palindromic widths were obtained. Further the upper bound was improved for $r=2$: ${\rm pw}({\rm N}_{n, 2}, X) \leq 3(n-1)$.  In this paper we further improve the lower bound of $\pw(\N_{n, r}, X)$.  The key step in this improvement is the calculation of $\pw(\n_{n,2}, Y)$.  We prove the following. 
\begin{theorem}\label{nn2}
\begin{enumerate}
\item For $n, r \geq 2$, $n \leq {\rm pw}(\n_{n,r}, Y) \leq 2n$.
\item The palindromic width of $\n_{n, 2}$ (with respect to the set of generators $Y$) is $2(n-1)$.
\end{enumerate}
\end{theorem}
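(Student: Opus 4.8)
My plan hinges on identifying the palindromes of $\widetilde{G}:=\n_{n,r}$ explicitly. Because $y_i^2=1$ every generator is an involution, so any element is represented by a positive word $b_1b_2\cdots b_m$ in $y_1,\dots,y_n$, and such a word is a literal palindrome exactly when $b_t=b_{m+1-t}$. An even palindrome then equals $(b_1\cdots b_s)(b_1\cdots b_s)^{-1}=1$ in $\widetilde G$, while an odd one equals $u\,b_{s+1}\,u^{-1}$ with $u=b_1\cdots b_s$; so the non-trivial palindromes of $\widetilde G$ are precisely the conjugates of the generators, and $\l(g)$ is the least number of conjugates of generators with product $g$. Every estimate below is an analysis of such products.

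\emph{Lower bounds.} The map $\widetilde G\to(\Z/2)^n$ sends a conjugate of $y_i$ to $e_i$, so in $g=p_1\cdots p_k$ the $i$-th coordinate of the image of $g$ is the parity of the number of factors conjugate to $y_i$; hence $\l(g)$ is at least the number of non-zero coordinates of the image of $g$, and $\l(y_1\cdots y_n)\ge n$. This gives $\pw(\n_{n,r},Y)\ge n$. For part (2) --- where for $n\ge3$ the bound $n$ no longer suffices --- I would use instead $w_0:=\prod_{i<j}[y_i,y_j]$ in $Z:=\gamma_2(\n_{n,2})$; here $Z$ is central and (one checks) elementary abelian with basis $\{c_{ij}:=[y_i,y_j]\}_{i<j}$, and a conjugate of $y_i$ has the form $y_i\zeta$ with $\zeta$ in $Z_i:=\langle c_{ij}:j\ne i\rangle$ (all such elements occurring). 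If $w_0=p_1\cdots p_k$ with $p_t=y_{i_t}\zeta_t$, then $w_0=(y_{i_1}\cdots y_{i_k})(\zeta_1\cdots\zeta_k)$; since $w_0\in Z$ each generator appears an even number of times among the $i_t$, so collecting $y_{i_1}\cdots y_{i_k}$ introduces only commutators $c_{ab}$ with $a,b$ in $S:=\{i_1,\dots,i_k\}$, and $\zeta_1\cdots\zeta_k$ lies in the span of the $c_{ab}$ with $\{a,b\}\cap S\ne\varnothing$. Thus $w_0$ lies in that span; as the $c_{ab}$ are independent and $w_0$ involves all of them, $S$ must meet every pair, so $|S|\ge n-1$, and each element of $S$ occurs (evenly, hence) at least twice, giving $k\ge2(n-1)$ and $\pw(\n_{n,2},Y)\ge2(n-1)$.

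\emph{Upper bounds.} For (1) I would induct on $r$ via $\n_{n,r}/\gamma_r(\n_{n,r})\cong\n_{n,r-1}$, carrying the stronger statement: every $g$ is a product of at most $2n-|\operatorname{supp}(g)|$ conjugates of generators in which each of $y_1,\dots,y_n$ occurs as the core of some factor ($\operatorname{supp}(g)$ being the support of the image of $g$ in $(\Z/2)^n$). The base $r=1$ is immediate: use $y_i$ once for $i\in\operatorname{supp}(g)$ and a cancelling pair $y_j,y_j$ for the rest. For the step, lift an expression for the image $\bar g\in\n_{n,r-1}$ to palindromes $p_t=u_ty_{i_t}u_t^{-1}$ of $\n_{n,r}$; then $p_1\cdots p_k=g\zeta^{-1}$ with $\zeta$ central in $\gamma_r(\n_{n,r})$. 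Replacing each $u_t$ by $u_tc_t$ with $c_t\in\gamma_{r-1}(\n_{n,r})$ does not change the image in $\n_{n,r-1}$ and multiplies the product by $[c_t,y_{i_t}]\in\gamma_r(\n_{n,r})$; since $c\mapsto[c,y_i]$ is a homomorphism $\gamma_{r-1}\to\gamma_r$ and $\gamma_r=[\gamma_{r-1},\widetilde G]$ is generated by the images of these homomorphisms over $i=1,\dots,n$, and every generator occurs among the $i_t$, one may choose the $c_t$ with $\prod_t[c_t,y_{i_t}]=\zeta$, repairing the product to $g$ without changing $k$ or the cores. For (2): given $w\in\n_{n,2}$ write $w=\big(\prod_{i\in A}y_i\big)z$ with $A=\operatorname{supp}(w)$ and $z\in Z$ a product of the $c_{ab}$ over a set $E$ of pairs. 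Pick $S\supseteq A$ meeting every pair of $E$ with $|S|\le n-1$ when $A\ne\{1,\dots,n\}$ (e.g.\ $S=\{1,\dots,n\}\setminus\{v\}$ for some $v\notin A$, which meets every $2$-element pair automatically), and $S=\{1,\dots,n\}$ otherwise. Using one conjugate of $y_i$ for $i\in A$ and a pair of conjugates of $y_i$ for $i\in S\setminus A$, i.e.\ $|A|+2|S\setminus A|=2|S|-|A|$ palindromes, the product can be made to equal $\big(\prod_{i\in A}y_i\big)$ times any prescribed element of $\langle c_{ab}:\{a,b\}\cap S\ne\varnothing\rangle$, in particular $z$; since $2|S|-|A|\le2(n-1)$ (the exceptional case giving count $n\le2(n-1)$) this yields $\pw(\n_{n,2},Y)\le2(n-1)$, and with the lower bound we obtain the equality in (2). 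The same argument refines (1) to the bound $2(n-1)$ when $r=2$.

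The step I expect to be most delicate is the lower bound in (2): one must justify carefully that collecting $y_{i_1}\cdots y_{i_k}$ to normal form produces only commutators $c_{ab}$ with $a,b$ among the occurring letters, and --- underpinning everything --- that $\gamma_2(\n_{n,2})$ is free elementary abelian of rank $\binom{n}{2}$ on the $c_{ij}$, i.e.\ that imposing $x_i^2=1$ on $\N_{n,2}$ forces no relation on the commutator subgroup beyond $[x_i,x_j]^2=1$. In (1) the only real subtlety is propagating the ``every generator occurs'' clause through the induction so that the central correction in $\gamma_r$ is always available.
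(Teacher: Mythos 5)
Your proposal is correct, and while it arrives at the same two bounds by touching many of the same landmarks, it is organized around a different and genuinely cleaner principle than the paper's proof. Your starting observation --- that in a group generated by involutions every non-trivial palindrome is exactly a conjugate of a generator, so $\l(g)$ is the least number of conjugates of generators with product $g$ --- is only implicit in the paper, which instead works through Lemma~\ref{cmm} (``$[g,p]$ is a product of $2$ palindromes'', ``$[g,y]y$ is a palindrome'') and through the normal forms of Proposition~\ref{pnf} pushed down from $\N_{n,2}$. For the upper bound in (1) the paper quotes the Allambergenov--Roman'kov decomposition (Lemma~\ref{ar1}) together with Lemma~\ref{reh} to write $g=\prod_i [u_i,y_i]y_i^{\alpha_i}$ and then applies Lemma~\ref{cmm}; your induction on $r$ with the central correction $\prod_t[c_t,y_{i_t}]=\zeta$ is essentially a self-contained re-proof of the same decomposition, at the price of having to carry the ``every generator occurs as a core'' invariant, which you correctly identify as the delicate point. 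For the lower bound in (2) you use the same witness $\prod_{i<j}[y_i,y_j]$ as the paper's Lemma~\ref{lem2}; your vertex-cover formulation is in fact slightly more careful than the paper's, since you explicitly account for the commutators created when the cores $y_{i_1}\cdots y_{i_k}$ are collected to normal form (the paper's bookkeeping with the sets $b(\,\cdot\,)$ glosses over this, although the extra commutators land harmlessly inside $b(p_a)\cup b(p_b)$ for $a,b$ among the occurring types). For the upper bound in (2) the paper defers to ``imitating \cite[Lemma 3.6]{BG}'', whereas your explicit construction with a cover $S=\{1,\dots,n\}\setminus\{v\}$ is self-contained. The one hypothesis you flag --- that $\gamma_2(\n_{n,2})$ is elementary abelian of rank $\binom{n}{2}$ on the $[y_i,y_j]$ --- does need checking (the normal closure of the $x_i^2$ in $\N_{n,2}$ meets $\gamma_2(\N_{n,2})$ exactly in $\langle [x_i,x_j]^2\rangle$, since a product of conjugates of the $x_i^{\pm2}$ lying in $\gamma_2$ has all exponent sums zero and hence reduces to a product of squares of basic commutators), and the paper relies on the same fact implicitly when it treats the $z_{kl}$ as independent in Lemmas~\ref{lem1} and~\ref{lem2}; so this is not a gap relative to the paper's own level of detail.
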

As a corollary to this theorem, we have the improvement of lower bound of $\pw(\N_{n,r},X)$. 
\begin{cor}\label{cor1}
Let $\N_{n, r}$ be the $r$-step free nilpotent group of rank $n \geq 2$.  Then
\begin{enumerate}
\item For $n>1$ and $r \geq 3$, $2(n-1) \leq \pw(\N_{n,r},X) \leq 3n$.
\item  $2(n-1) \leq \pw(\N_{n,2},X)\leq 3(n-1)$. \end{enumerate}
\end{cor}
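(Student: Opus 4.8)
The plan is to obtain \corref{cor1} directly from \thmref{nn2} together with the bounds of \cite[Theorem 1.1]{BG}, using the monotonicity of palindromic width under epimorphisms recorded in \lemref{onto}. The upper bounds in the statement are precisely the ones already established in \cite{BG}, namely $\pw(\N_{n,r},X)\le 3n$ for all $r\ge 2$ and the sharper $\pw(\N_{n,2},X)\le 3(n-1)$; so the only new ingredient needed here is the lower bound $2(n-1)\le \pw(\N_{n,r},X)$, valid for every $r\ge 2$.

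For the lower bound I would first observe that for each $r\ge 2$ there is a natural epimorphism $\N_{n,r}\twoheadrightarrow \N_{n,2}$ (the quotient by the third term of the lower central series, which is the identity when $r=2$), and that this map carries the basis $X=\{x_1,\dots,x_n\}$ of $\N_{n,r}$ onto a basis of $\N_{n,2}$. Composing it with the defining quotient map $\N_{n,2}\twoheadrightarrow \n_{n,2}=\N_{n,2}/\langle\langle x_1^2,\dots,x_n^2\rangle\rangle$, which by construction sends that basis onto the generating set $Y$, yields an epimorphism $\N_{n,r}\twoheadrightarrow \n_{n,2}$ taking $X$ onto $Y$. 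Hence \lemref{onto} applies and gives $\pw(\n_{n,2},Y)\le \pw(\N_{n,r},X)$.

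Now invoke \thmref{nn2}(2), which asserts $\pw(\n_{n,2},Y)=2(n-1)$. Combining the last two facts gives $2(n-1)\le \pw(\N_{n,r},X)$ for every $r\ge 2$. Part (1) then follows by pairing this with $\pw(\N_{n,r},X)\le 3n$ from \cite[Theorem 1.1]{BG} (valid for $r\ge 2$, in particular for $r\ge 3$), and part (2) follows by pairing it with the sharper bound $\pw(\N_{n,2},X)\le 3(n-1)$ of the same theorem.

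Since the argument consists entirely of quoting \thmref{nn2} and \cite{BG} and chasing a two-step quotient, there is essentially no obstacle; the only point meriting a line of care is verifying that the composite epimorphism maps the chosen generating set $X$ of $\N_{n,r}$ onto the generating set $Y$ of $\n_{n,2}$, so that \lemref{onto} is applied with matching generating sets — and this is immediate from the way $\n_{n,2}$ and $Y$ are defined.
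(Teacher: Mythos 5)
Your proof is correct and follows essentially the same route as the paper: the lower bound comes from $\pw(\n_{n,2},Y)=2(n-1)$ pulled back along an epimorphism onto $\n_{n,2}$ via Lemma~\ref{onto}, and the upper bounds are quoted from \cite{BG}. The only cosmetic difference is that the paper factors the step from $\N_{n,2}$ to $\N_{n,r}$ through the monotonicity chain of Lemma~\ref{c1}, whereas you compose the two epimorphisms directly; these are the same argument.
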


For the group $\n_{3,2}$, we have proved something more.
\begin{prop}\label{n:32}
{\it Let $z_{ij}$ denote the commutator $[y_i, y_j]$ in $\n_{3,2}$. In $\n_{3,2}$ the only element that can
not be expressed as a product of three palindromes is $z_{21} z_{31} z_{32}$.
Moreover, $\l(z_{21} z_{31} z_{32})=4$.}
\end{prop}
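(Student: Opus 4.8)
The plan is to make the group $\n_{3,2}$ and its palindromes completely explicit, and then reduce the statement to a small computation in $(\Z/2)^3$. Since $y_1,y_2,y_3$ are involutions and the commutators $z_{ij}=[y_i,y_j]$ are central of order $2$ with $z_{ij}=z_{ji}$, every element of $\n_{3,2}$ is uniquely $y_1^{e_1}y_2^{e_2}y_3^{e_3}v$ with $e_i\in\{0,1\}$ and $v$ in the commutator subgroup $C=\langle z_{12},z_{13},z_{23}\rangle\cong(\Z/2)^3$; in particular $|\n_{3,2}|=64$. The key observation is that in a group generated by involutions an even-length palindromic word $b_1\cdots b_m b_m\cdots b_1$ collapses to $1$ (peel off $b_m^2=1$, then $b_{m-1}^2=1$, and so on), while an odd-length palindromic word $b_1\cdots b_m\,y_i\,b_m\cdots b_1$ equals $gy_ig^{-1}$ with $g=b_1\cdots b_m$; conversely, writing an arbitrary $g$ as a positive word $b_1\cdots b_m$ exhibits $gy_ig^{-1}$ as a palindrome. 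Hence the set of palindromes is $\mathcal P=\{1\}\cup\{gy_ig^{-1}: g\in\n_{3,2},\ i=1,2,3\}$, so $\l(h)$ is exactly the least number of conjugates of the generators whose product is $h$.

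Next I would compute these conjugates: for $g=y_1^{s_1}y_2^{s_2}y_3^{s_3}\cdot(\text{central})$ one gets $gy_ig^{-1}=y_i\prod_{j\ne i}z_{ij}^{s_j}$, so the conjugates of $y_i$ are precisely $y_iw$ with $w$ ranging over the order-$4$ subgroup $W_i=\langle z_{ij}:j\ne i\rangle\le C$. A product $p_1\cdots p_t$ of conjugates of generators, say $p_l=y_{i_l}w_l$, has image $e_{i_1}+\cdots+e_{i_t}$ in the abelianization $(\Z/2)^3$ (where $e_i$ denotes the image of $y_i$) and $C$-part $w_1+\cdots+w_t$ together with the central correction coming from putting $y_{i_1}\cdots y_{i_t}$ in normal form. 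Splitting on the abelianized image I would use: for image of weight $1$, say $e_1$, the index pattern $(1,2,2)$, whose $C$-part ranges over $W_1+W_2$; for weight $2$, say $e_1+e_2$, the pattern $(1,2)$, with $C$-part ranging over $W_1+W_2$; for weight $3$ the pattern $(1,2,3)$, with $C$-part $W_1+W_2+W_3$. In each of these patterns the letters are in non-decreasing order and repeated letters simply cancel, so the central correction is $0$; since $W_1+W_2=W_1+W_2+W_3=C$, every element of $\n_{3,2}$ lying outside $C$ is a product of at most $3$ palindromes.

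It remains to analyze $C$ itself. If a product of $\le 3$ nontrivial conjugates of generators has trivial abelianized image, the number $t$ of factors cannot be odd (then some $e_i$ would have to occur an even number of times, forcing $t$ even), so $t\in\{0,2\}$; for $t=2$ we need the two indices equal, and then $p_1p_2=y_iw_1y_iw_2=w_1w_2\in W_i$. Thus the elements of $C$ of palindromic length $\le 3$ are exactly $W_1\cup W_2\cup W_3$, a set of $7$ of the $8$ elements of $C$, the one missing element being $z_{12}+z_{13}+z_{23}=z_{21}z_{31}z_{32}$ (it lies in no $W_i$). This proves both that $z_{21}z_{31}z_{32}$ is the unique element of $\n_{3,2}$ not a product of three palindromes and that $\l(z_{21}z_{31}z_{32})\ge 4$; for the reverse inequality I would exhibit a factorization into four palindromes, e.g. $z_{21}z_{31}z_{32}=(y_3y_1y_3)(y_1)(y_1y_3y_2y_3y_1)(y_2)$, which multiplies out to $z_{13}z_{12}z_{23}$ once the $y$'s are cancelled in pairs and the central factors collected. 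The only genuine work is the bookkeeping of the central corrections when reordering $y_{i_1}\cdots y_{i_t}$ and the verification that the relevant sums $W_a+W_b$ already fill $C$; once palindromes are recognized as conjugates of generators, everything else is a finite check.
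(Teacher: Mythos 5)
Your proof is correct, and it takes a genuinely different, more structural route than the paper's. Your key packaging step -- that in a group generated by involutions an even-length palindromic word collapses to the identity while an odd-length one is a conjugate $gy_ig^{-1}$ of a generator, so the nontrivial palindromes are exactly the sets $y_iW_i$ with $W_i=\langle z_{ij}\colon j\ne i\rangle$ -- carries the same information as the specialization of Proposition~\ref{pnf} to $\widetilde{\rm N}_{3,2}$, but it lets you replace the paper's exhaustive enumeration of all $64$ elements (organized by the weight $|w|$, with an explicit factorization written out in every case) by a single coset computation in $(\mathbb{Z}/2)^3$: for each nontrivial abelianized image one index pattern already sweeps out $W_a+W_b=C$, and for elements of the commutator subgroup the parity of the index multiset forces a product of at most three nontrivial palindromes landing in $C$ to lie in $W_1\cup W_2\cup W_3$, which misses exactly $z_{21}z_{31}z_{32}$. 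This also gives a lower-bound argument for $\l(z_{21}z_{31}z_{32})\ge 4$ different from the paper's Lemma~\ref{lem2} (which tracks which basis commutators occur in the normal forms of the factors); yours is arguably sharper since it identifies precisely which elements of $C$ have palindromic length at most three. Your closing four-palindrome factorization checks out: $y_3y_1y_3\cdot y_1=z_{31}$ and $y_1y_3y_2y_3y_1\cdot y_2=z_{21}z_{32}$, whose product is $z_{21}z_{31}z_{32}$. What your approach buys is brevity and a conceptual explanation of why exactly one element is exceptional; what the paper's enumeration buys is an explicit short palindromic factorization of every single element, which your argument produces only implicitly. The only blemish is the garbled parenthetical in your parity step ("some $e_i$ would have to occur an even number of times, forcing $t$ even"); the intended and correct statement is that \emph{each} index must occur an even number of times, so their total $t$ is even.
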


We recall that a group $G$  is \emph{boundedly generated } if there exist $a_1, \ldots, a_k \in G$ such that every element can be expressed as $a_1^{n_1} \ldots a_k^{n_k}$ for some integers $n_1, \ldots, n_k$, for eg. see  \cite{ck, s}. Note that there are free metableian groups those are not boundedly generated. The authors \cite{BG} and, Riley and Sale \cite{rs} have independently established the finiteness of palindromic width of free metabelian groups using different techniques. The authors actually proved a stronger result: any free abelian-by-nilpotent group of rank $n$ has palindromic width at most $5n$, cf. \cite[Section 3.4]{BG}. In particular, it follows that ${\rm pw}(M_n, X) \leq 5n$. We further improve this bound in this article.
\begin{theorem}\label{math}
Let $M_n$ be a free metabelian group of rank $n$.  Then
$$\pw(M_n, X)\leq 4n-1.$$
\end{theorem}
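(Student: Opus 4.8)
The plan is to refine the argument of \cite[Section~3.4]{BG} (which produced the bound $5n$) by arranging that the palindromes handling the derived subgroup already carry the abelianization, so the two parts are not paid for separately. Write $M_n'=[M_n,M_n]$, an abelian normal subgroup with $M_n/M_n'\cong\Z^n$, let $\varepsilon\colon M_n\to\Z^n$ be the abelianization, and use the Magnus embedding
$$\mu\colon M_n\hookrightarrow\Z[\Z^n]^n\rtimes\Z^n,\qquad\mu(x_i)=(e_i,t_i),$$
where $\Z^n=\langle t_1,\dots,t_n\rangle$ acts coordinatewise on $\Z[\Z^n]^n$, $e_i$ is the $i$-th free-module basis vector, and $(m,t)(m',t')=(m+t\,m',\,tt')$; under $\mu$ the $i$-th coordinate of $\mu(g)$ is the image in $\Z[\Z^n]$ of the Fox derivative $\partial g/\partial x_i$, and $M_n'$ is identified with $W=\{(m_1,\dots,m_n):\ \sum_i m_i(t_i-1)=0\}$. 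From the multiplication rule one reads off two facts. (i) An even-length palindrome is exactly a word $u\,u^{\mathrm{rev}}$, representing $v\,\theta(v)$ with $v$ represented by $u$ and $\theta$ the word-reversing anti-automorphism of $M_n$; since $\theta$ fixes $\varepsilon$, such a palindrome lies in $M_n'$ precisely when $v\in M_n'$, and then its Fox coordinates are computable from those of $v$, but as $v$ runs over $M_n'$ these cover only a proper subgroup of $W$ (because $\theta$ is a nontrivial involution on $M_n'$), so palindromes inside $M_n'$ do not suffice on their own. (ii) Palindromes such as $x_i^{a}x_j^{b}x_i^{a}$ and $x_i^{a}x_j^{b}x_i^{c}x_j^{b}x_i^{a}$ (and longer palindromes built similarly) have explicitly computable Fox coordinates, through which one can introduce a prescribed $\Z[\Z^n]$-combination of conjugates of a single basic commutator $[x_j,x_i]$ at a controlled cost in $\varepsilon$.

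Since each $x_i^{a_i}$ is a palindrome, $g=x_1^{a_1}\cdots x_n^{a_n}\,c$ with $\varepsilon(g)=(a_1,\dots,a_n)$ and $c\in M_n'$; here the $n$ palindromes cannot be improved in general, because an even-length palindromic word abelianizes into $2\Z^n$ and an odd-length one into a coset $2\Z^n+\varepsilon(x_j)^{\pm1}$, so $\pw(\Z^n,\varepsilon(X))=n$ (extremal element $\varepsilon(x_1\cdots x_n)$), whence $\pw(M_n,X)\ge n$ (cf.\ \lemref{onto}). It then suffices to write an arbitrary $c\in M_n'$, i.e.\ an arbitrary $\mathbf m\in W$, as a product of at most $3n-1$ palindromes. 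Using that $M_n'$ is free abelian on conjugates $[x_j,x_i]^{x_1^{\ell_1}\cdots x_n^{\ell_n}}$ of the basic commutators ($i<j$, exponents suitably normalized), on which $\Z^n$ acts by translating the exponents, we annihilate the Fox coordinates $m_n,m_{n-1},\dots,m_2$ in turn; after that the relation $\sum_i m_i(t_i-1)=0$ forces $m_1(t_1-1)=0$, hence $m_1=0$ and $c=1$. Each $m_i$ is killed by left-multiplying by a bounded number of palindromes — of type (i) for its $\theta$-symmetric part and of type (ii) for the rest — and the $\varepsilon$-perturbations introduced by the type (ii) palindromes are not corrected with fresh palindromes but folded into the $x_i^{a_i}$ already present; a careful count of this process gives $3n-1$ palindromes for $c$, hence $4n-1$ for $g$.

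The hard part is the step killing a single Fox coordinate: one must realize an essentially arbitrary element of $\Z[\Z^n]$ as the $i$-th Fox coordinate of a bounded product of (possibly long) palindromes while simultaneously (a) controlling the cross-terms that the $\Z^n$-action deposits into the remaining Fox coordinates, so that killing $m_n,\dots,m_2$ in succession terminates, and (b) steering $\varepsilon$ so that the resulting abelian perturbations can be absorbed into the $x_i^{a_i}$'s without spending extra palindromes. Making (a) and (b) compatible, and in particular trimming the final count to $3n-1$ — rather than the roughly $4n$ that an independent treatment of the two parts, as in \cite{BG}, yields — requires a delicate choice of the palindrome words; one should also confirm directly in the base case $n=2$ (where $M_2'$ is a free $\Z[\Z^2]$-module of rank one and $\pw(M_2,X)\le 7$ can be checked by hand) that the bookkeeping really delivers $4n-1$.
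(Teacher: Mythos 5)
Your proposal is not a proof: the entire content of the bound is concentrated in the step you explicitly leave open. You reduce the theorem to the claim that every element of $M_n'$ is a product of at most $3n-1$ palindromes, and you propose to establish this by killing the Fox coordinates $m_n,\dots,m_2$ one at a time, each at the cost of ``a bounded number of palindromes,'' with the abelianization perturbations ``folded into the $x_i^{a_i}$ already present.'' But no bound per coordinate is ever exhibited, no palindrome words of type (ii) are actually written down with their Fox coordinates computed, and you concede yourself that making the two bookkeeping constraints (a) and (b) compatible ``requires a delicate choice of the palindrome words'' and that even the case $n=2$ ``should be confirmed.'' As it stands, the count $3n-1$ is an assertion, not a consequence of anything you prove; if each of the $n-1$ coordinates genuinely cost $3$ palindromes plus separate corrections to $\varepsilon$, you would land at roughly $4n$, which is exactly the naive bound you are trying to beat. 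Side remarks such as ``palindromes inside $M_n'$ cover only a proper subgroup of $W$'' are likewise asserted without justification (though they are not needed).

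For comparison, the argument the paper uses is short and entirely elementary, with no Magnus embedding or Fox calculus. By Theorem~\ref{mr1} applied with $A=\gamma_2(M_n)$, every $g$ has the form $x_1^{\alpha_1}\cdots x_n^{\alpha_n}[u_1,x_1]\cdots[u_n,x_n]$. Since $M_n'$ is abelian the commutators can be reordered, and the identity $x_n^{\alpha_n}[u_n,x_n]=[v_n,x_n]x_n^{\alpha_n}$ with $v_n=x_n^{\alpha_n}u_nx_n^{-\alpha_n}$ lets one merge the factor $x_n^{\alpha_n}$ with one commutator. Lemma~\ref{l:32}(3) then gives: $n-1$ palindromes for $x_1^{\alpha_1}\cdots x_{n-1}^{\alpha_{n-1}}$, three for $[v_n,x_n]x_n^{\alpha_n}$, and three for each of the remaining $n-1$ commutators $[u_i,x_i]$, for a total of $(n-1)+3+3(n-1)=4n-1$. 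If you want to salvage your approach, the honest route is to prove a concrete lemma of the form ``every element of $M_n'$ is a product of at most $3n-1$ palindromes'' with explicit palindrome words and a verified count; until then the proposal has a genuine gap.
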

We also investigate \probref{pr1} for free abelian-by-nilpotent groups. This lead to further examples of finitely generated groups which are not boundedly generated but have finite palindromic width.
\begin{theorem}\label{abn}
(i) Let $AN_n$ be a free abelian-by-nilpotent group of rank $n$. Then for $n \geq 2$,
$n \leq \pw(\widetilde{AN}_n, Y) \leq 3n$.  

\medskip (ii) Further, for a free metabelian group $M_n$ of rank $n$ we have, $n \leq \pw(\widetilde{M}_n, Y) \leq 3n-1$. 
\end{theorem}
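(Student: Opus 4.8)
The plan is to split an element into a ``nilpotent part'' and an ``abelian part'', bounding the former by Theorem~\ref{nn2} and the latter by a direct analysis of the abelian normal subgroup as a set of products of palindromes. This is the scheme used in \cite[Section~3.4]{BG}, sharpened so as to exploit the relations $y_i^2=1$.

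Both lower bounds come from abelianization. In each case $\widetilde{AN}_n$, resp.\ $\widetilde{M}_n$, surjects onto $(\Z/2)^n$ with $Y$ mapping to a basis $e_1,\dots,e_n$. If $w$ is a palindrome in $Y^{\pm1}$, then matching the $i$-th letter of $w$ with the $(|w|+1-i)$-th (which equals it) shows that the exponent sum of each generator in $w$ is even when $|w|$ is even, so $w\mapsto 0$, whereas when $|w|$ is odd the unpaired central letter is the only contribution, so $w$ maps to a single basis vector. Hence a product of $k$ palindromes lands in the set of sums of at most $k$ basis vectors, and realising $e_1+\cdots+e_n$ forces $k\ge n$; therefore $\pw(\widetilde{AN}_n,Y)\ge n$ and $\pw(\widetilde{M}_n,Y)\ge n$.

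For the upper bound in (i), let $A\trianglelefteq AN_n$ be the free abelian normal subgroup and $\widetilde A$ its image in $\widetilde{AN}_n$, so that $\widetilde{AN}_n/\widetilde A\cong\n_{n,r}$ with $r$ the nilpotency step of $AN_n/A$; let $\pi:\widetilde{AN}_n\to\n_{n,r}$ be the projection. If $r\ge 2$, Theorem~\ref{nn2}(1) writes $\pi(g)$, for a given $g$, as a product of $2n$ palindromes of $\n_{n,r}$; spelling the same words over $Y$ inside $\widetilde{AN}_n$ yields palindromes $p_1,\dots,p_{2n}$ with $g=p_1\cdots p_{2n}\,a$ for some $a\in\widetilde A$ (if $r=1$ then $AN_n=M_n$ and the stronger bound of part (ii) applies, so we may assume $r\ge 2$). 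It remains to bound $\l(a)$ by $n$ for every $a\in\widetilde A$, which gives $\l(g)\le 3n$. For this I would use that $\widetilde A$ is generated, as a module over the group ring of a quotient of $\n_{n,r}$ in which every generator is an involution, by finitely many commutators, and that a palindrome $y_{i_1}\cdots y_{i_s}\,u\,y_{i_s}\cdots y_{i_1}$, with $u$ a palindromic word representing a chosen element of $\widetilde A$, represents the conjugate of that element by $y_{i_1}\cdots y_{i_s}$, i.e.\ its product with a group monomial. The relations $y_i^2=1$ collapse the coefficient ring acting on each commutator generator to a small piece, and combined with the Hall--Witt relations among those generators this should let an arbitrary element of $\widetilde A$ be written as $n$ pieces, grouped along the $n$ coordinate directions, each a monomial multiple of a single palindromically representable element. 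I expect this collapsing-and-grouping count to be the main obstacle.

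For the upper bound in (ii), take $AN_n=M_n$: then $\widetilde A=\widetilde{M}_n'$ and $\widetilde{AN}_n/\widetilde A=(\Z/2)^n$, whose palindromic width with respect to $Y$ is exactly $n$ (each $y_i$ is a palindrome). Running the argument of (i) with this input in place of Theorem~\ref{nn2}(1) replaces the $2n$ palindromes from the nilpotent part by $n$, and one is left with bounding $\l(a)$ for $a\in\widetilde{M}_n'$ by $2n-1$. This should follow from a refinement of the module analysis above, in which the Hall--Witt relations are now available in full strength and one of the palindromes used for the abelian part can be merged with one of the $n$ coming from $(\Z/2)^n$, saving one factor; summing then gives $\pw(\widetilde{M}_n,Y)\le n+(2n-1)=3n-1$. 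As in (i), the delicate point is the exact count inside $\widetilde{M}_n'$.
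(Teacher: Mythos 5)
Your lower bounds are fine and essentially the paper's: both groups surject onto $(\Z/2)^n$, a palindrome maps there to $0$ or to a single basis vector according to the parity of its length, and hitting $e_1+\cdots+e_n$ costs $n$ palindromes.

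The upper bounds, however, have a genuine gap, and you flag it yourself. Your scheme is to cover the image of $g$ in the nilpotent quotient $\widetilde{AN}_n/\widetilde A$ by $2n$ palindromes (via Theorem~\ref{nn2}) and then bound $\l(a)$ for the leftover $a\in\widetilde A$ by $n$ (respectively by $2n-1$ for $a\in\widetilde M_n'$). That second bound is exactly the hard content, and your sketch of it --- ``collapsing'' the coefficient ring by $y_i^2=1$, Hall--Witt relations, grouping into $n$ monomial multiples of palindromically representable elements --- is not an argument; you say as much when you call it ``the main obstacle.'' Moreover the bound you need is implausibly strong as stated: the natural estimate one gets for elements of $\widetilde A$ (write $a=[u_1,y_1]\cdots[u_n,y_n]$ via Lemma~\ref{reh}/Theorem~\ref{mr1} with all exponents $\alpha_i=0$, then apply Lemma~\ref{cmm}) is $2n$ palindromes, not $n$, which in your accounting would give $2n+2n=4n$, worse than the claimed $3n$. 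The paper avoids the quotient-plus-kernel split entirely: by the Akhavan-Malayeri--Rhemtulla normal form (Theorem~\ref{mr1}) every $g$ equals $y_1^{\epsilon_1}\cdots y_n^{\epsilon_n}\,[g_1,y_1]^{a_1}\cdots[g_n,y_n]^{a_n}$, and Lemma~\ref{cmm}(2) --- which exploits that the $y_i$ are involutions --- makes each conjugated commutator $[g_i,y_i]^{a_i}$ a product of exactly $2$ palindromes, giving $n+2n=3n$ in one stroke. For the metabelian case the saving of one palindrome is not a module-theoretic refinement but a concrete commutation: since the commutators pairwise commute, one moves $[u_n,y_n]$ next to $y_n^{\epsilon_n}$ and rewrites $y_n^{\epsilon_n}[u_n,y_n]=[v_n,y_n]y_n^{\epsilon_n}$, which by Lemma~\ref{cmm}(1) is $2$ palindromes, yielding $(n-1)+2+2(n-1)=3n-1$. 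To complete your proof you would either have to supply the missing bound on $\l$ restricted to $\widetilde A$ (which does not appear to be available at the strength you need) or switch to the normal-form argument.
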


\medskip After reviewing some preliminary results in \secref{prel}, we prove Theorem \ref{nn2} in \secref{nil}. The \secref{n32}  is devoted to the proof of Proposition \ref{n:32}. In \secref{abns}, we 
investigate palindromic width of $\widetilde G$
for $G$ free abelian-by-nilpotent.  We prove  \thmref{math} and \thmref{abn}  in this section.

\subsection*{Acknowledgements}
We thank Elisabeth Fink, Andrew Sale and Tim Riley for their comments on this article. 

%In fact, in this Section we find palindromic length of all elements of $\n_{3,2}$. This also provides
%illustrations of some of the results in \secref{nil} to the special case of $\n_{3,2}$.

\section{Preliminaries} \label{prel}
\subsection{Palindromes in Groups}
In this Section we collect results that will be useful for us.

\begin{lemma}\label{onto} \cite{BG}
Let $G = \langle X \rangle$ and $H = \langle Y \rangle$ be two groups,
$\mathcal{P}(X)$ is the set of palindromes in the alphabet $X^{\pm 1},$
$\mathcal{P}(Y)$ is the set of palindromes in the alphabet $Y^{\pm 1}.$ If
$\varphi : G \longrightarrow H$ be an epimorphism such that
$\varphi(X) = Y$,  then
$$
{\rm pw}(H,Y) \leq {\rm pw}(G,X).
$$
\end{lemma}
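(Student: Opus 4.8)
The plan is to argue directly that $\varphi$ maps palindromes to palindromes, so that a palindromic factorization in $G$ pushes forward to one in $H$ of the same length. First I would observe that since $\varphi\colon G\to H$ is an epimorphism with $\varphi(X)=Y$, we may assume (after possibly relabelling) that $\varphi(x_i)=y_i$ for the generators. Now take any word $w=a_1a_2\cdots a_m$ with each $a_j\in X^{\pm1}$, and suppose $w$ is a palindrome, i.e.\ $a_j=a_{m+1-j}$ for all $j$. Applying $\varphi$ letter by letter, $\varphi(w)=\varphi(a_1)\varphi(a_2)\cdots\varphi(a_m)$, where each $\varphi(a_j)\in Y^{\pm1}$ and $\varphi(a_j)=\varphi(a_{m+1-j})$; hence the word $\varphi(a_1)\cdots\varphi(a_m)$ is again a palindrome in the alphabet $Y^{\pm1}$. (One must note the word $\varphi(w)$ need not be reduced, but an element of $H$ is called a palindrome as soon as it is \emph{represented} by some palindromic word, so this causes no difficulty.) Therefore $\varphi(\mathcal{P}(X))\subseteq \mathcal{P}(Y)$.

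Next I would use surjectivity of $\varphi$ together with the previous observation. Fix $h\in H$ and choose $g\in G$ with $\varphi(g)=h$. Write $g=p_1p_2\cdots p_k$ as a product of palindromes in $G$ with $k=l_{\mathcal{P}}(g)$. Then $h=\varphi(g)=\varphi(p_1)\varphi(p_2)\cdots\varphi(p_k)$, and by the first paragraph each $\varphi(p_i)\in\mathcal{P}(Y)$. Hence $l_{\mathcal{P}}(h)\le k=l_{\mathcal{P}}(g)\le \mathrm{pw}(G,X)$. Since $h\in H$ was arbitrary, taking the supremum over $h$ gives $\mathrm{pw}(H,Y)=\sup_{h\in H} l_{\mathcal{P}}(h)\le \mathrm{pw}(G,X)$, which is the claim.

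I do not expect any serious obstacle here; the only subtlety worth flagging explicitly in the write-up is that $\varphi$ applied to a reduced palindromic word may yield an unreduced word, but since the definition of a palindromic element only requires the existence of \emph{a} palindromic word representing it, reduction is irrelevant — in fact reducing a palindrome in a free group keeps it a palindrome, but we do not even need that. If $\mathrm{pw}(G,X)=\infty$ the inequality is vacuous, so one may freely assume it is finite. This completes the proof.
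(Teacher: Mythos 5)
Your argument is correct and is the standard one: the paper itself only cites \cite{BG} for this lemma without reproducing a proof, and your push-forward argument (palindromic words map letterwise to palindromic words, so a length-$k$ palindromic factorization of a preimage yields one of $h$) is exactly the intended proof. The remarks about non-reduced images and the vacuous infinite case are appropriate and nothing further is needed.
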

\begin{lemma} \label{l:32} { \cite[Lemma 2.4]{BG}}
Let $G=\langle X \rangle$ be a group generated by a set $X$. Then the following
hold.
\begin{enumerate}
\item{ If $p$ is a palindrome, then for $m$ in $\mathbb Z$, $p^m$ is also a palindrome.}

\item{ Any element in $G$ which is conjugate to a product of $n$ palindromes, $n
\geq 1$,  is a product of $n$ palindromes if $n$ is even, and of $n+1$ palindromes
if $n$ is odd. }

\item{Any commutator of the type  $[u, p],$  where $p$ is a palindrome is
a product of $3$ palindromes.
Any element $[u, x^{\alpha}] x^{\beta}$, $x \in X$, $\alpha, \beta \in
\mathbb{Z},$ is a product of $3$ palindromes.}

\item{In $G$ any commutator of the type  $[u, p q],$  where $p, q$ are
palindromes is a product of $4$ palindromes.
Any element $[u, p x^{\alpha}] x^{\beta}$, $x \in X$, $\alpha, \beta \in
\mathbb{Z},$ is a product of $4$ palindromes.}
\end{enumerate}
\end{lemma}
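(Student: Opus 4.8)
The plan is to reduce the whole lemma to a single ``reversal'' bookkeeping device. For a word $w = c_1 c_2 \cdots c_k$ in $X^{\pm 1}$ write $\overleftarrow{w} = c_k \cdots c_2 c_1$ for the reversed word, so that by definition $w$ represents a palindrome exactly when $w = \overleftarrow{w}$ as words, and one has the formal identities $\overleftarrow{uv} = \overleftarrow{v}\,\overleftarrow{u}$, $\overleftarrow{w^{-1}} = (\overleftarrow{w})^{-1}$, and $\overleftarrow{\overleftarrow{w}} = w$. From these, three families of palindromic words fall out for free: $w\,p\,\overleftarrow{w}$ and $(\overleftarrow{w})^{-1} p\, w^{-1}$ are palindromic words whenever $p$ is one, and $w\,\overleftarrow{w}$ and $\overleftarrow{w}\,w$ are always palindromic. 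Part $(1)$ is then immediate: if $w = \overleftarrow{w}$ then $\overleftarrow{w^m} = (\overleftarrow{w})^m = w^m$ for $m \geq 0$, while for $m < 0$ one notes $\overleftarrow{w^{-1}} = (\overleftarrow w)^{-1} = w^{-1}$ and applies the previous case to $p^{-1}$.

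The heart of the lemma is part $(2)$, from which the rest is formal. Given $g$ conjugate to $p_1 \cdots p_n$, write $g = w\, p_1 \cdots p_n\, w^{-1}$ for a word $w$ representing the conjugating element. The key identity is
$$
w\, p\, q\, w^{-1} \;=\; \bigl(w\, p\, \overleftarrow{w}\bigr)\bigl((\overleftarrow{w})^{-1} q\, w^{-1}\bigr),
$$
which exhibits $w p q w^{-1}$ as a product of two palindromes. Pairing the $p_i$ two at a time and telescoping the internal $w^{-1}w$'s gives, when $n = 2m$, $g = \prod_{j=1}^{m}\bigl(w\, p_{2j-1} p_{2j}\, w^{-1}\bigr)$, a product of $2m = n$ palindromes; when $n = 2m+1$ there is a leftover factor $w\, p_{2m+1}\, w^{-1}$, and $w p w^{-1} = (w p \overleftarrow w)\bigl((\overleftarrow w)^{-1} w^{-1}\bigr) = (w p \overleftarrow w)(w \overleftarrow w)^{-1}$ together with part $(1)$ shows this too is a product of two palindromes, for a total of $2m+2 = n+1$. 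This is exactly the claimed count.

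For parts $(3)$ and $(4)$, I would first observe that every power $x^{\alpha}$ of a generator is a palindrome (the word $x\cdots x$), and write a commutator as $[u,v] = (u^{-1} v^{-1} u)\, v$. If $v = p$ is a palindrome, then $u^{-1} v^{-1} u$ is conjugate to the palindrome $p^{-1}$, hence a product of $2$ palindromes by part $(2)$, so $[u,p]$ is a product of $2 + 1 = 3$ palindromes; if $v = pq$ with $p,q$ palindromes, then $u^{-1} v^{-1} u$ is conjugate to $q^{-1}p^{-1}$, hence a product of $2$ palindromes by part $(2)$, so $[u,pq]$ is a product of $2 + 2 = 4$ palindromes. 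The ``$[u, x^{\alpha}] x^{\beta}$'' and ``$[u, p x^{\alpha}] x^{\beta}$'' variants follow by absorbing the trailing power: $[u,v]\,x^{\beta} = (u^{-1}v^{-1}u)(v x^{\beta})$, where $v x^{\beta}$ is $x^{\alpha+\beta}$ (one palindrome) in the first case and $p\,x^{\alpha+\beta}$ (two palindromes) in the second, so the counts $3$ and $4$ are unchanged.

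The only step with any genuine content is part $(2)$, and the one thing to be careful about there is conceptual rather than computational: the notion of palindrome here is existential (some representing word over $X^{\pm1}$ reads the same both ways), so one should never pass to freely reduced words — all manipulations stay at the level of words — and should keep the inverse-and-reversal algebra straight, in particular checking that $(\overleftarrow{w})^{-1} p\, w^{-1}$ really is a palindromic word. Once the sandwich identity is in hand, parts $(1)$, $(3)$, and $(4)$ are pure bookkeeping.
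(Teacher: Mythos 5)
The paper does not prove this lemma—it is quoted verbatim from \cite[Lemma 2.4]{BG} without proof—so there is no in-paper argument to compare against; your proof is correct and is essentially the standard one from that source, built on the sandwich identity $wpqw^{-1}=(wp\overleftarrow{w})\bigl((\overleftarrow{w})^{-1}qw^{-1}\bigr)$ and the reversal calculus. The only cosmetic caveat is that since the paper's definition asks for a \emph{reduced} palindromic representative, one should add the one-line remark that freely reducing a palindromic word (cancellations occur in mirror-symmetric pairs) again yields a palindromic word.
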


%\begin{lemma}\label{reh1} \cite[Lemma 2]{R1}
%Let $G=\langle H, x_1, x_2, \ldots, x_n \rangle$ where $H$ is an abelian normal subgroup of $G$. Then the subgroup $[H, G]$ is precisely the following set
%$$[H, G]=\{[h_1, x_1][h_2, x_2]\ldots [h_n, x_n] ~ | ~ h_i \in H\}.$$
%\end{lemma}
%The following lemma generalizes the above lemma.

\begin{lemma}\label{reh} \cite[Lemma 3]{MR}
Let $A$ be a normal subgroup of $G = \langle x_1, x_2, \ldots, x_n \rangle$. If $A$ is abelian or $A$ lies in the second center of $G $, then every element of $[ A, G]$ has the form
$$[u_1, x_1] \, [u_2, x_2] \, \ldots \, [u_n, x_n] \hbox{ for some }u_i \in A.$$
\end{lemma}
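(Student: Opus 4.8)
The plan is to exhibit the explicit set
$$T=\{[u_1,x_1][u_2,x_2]\cdots[u_n,x_n] : u_i\in A\}$$
and to prove that $T=[A,G]$; throughout I use the conventions $[u,v]=u^{-1}v^{-1}uv$ and $u^{v}=v^{-1}uv$. One inclusion is immediate, since each factor $[u_i,x_i]$ lies in $[A,G]$, so $T\subseteq[A,G]$. Two preliminary observations organise the rest. First, because $A$ is normal, $[a,g]=a^{-1}a^{g}\in A$ for all $a\in A,\ g\in G$, so $B:=[A,G]\subseteq A$. Second, $B$ is abelian in both cases: if $A$ is abelian this is inherited from $A$, while if $A$ lies in the second centre then $B=[A,G]\subseteq Z(G)$. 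Finally, $[A,G]$ is generated as a subgroup by the commutators $[a,g]$, so it will suffice to show that $T$ is a subgroup containing every such $[a,g]$.

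The key algebraic fact, which lets me replace $A$ by a commuting object, is that for $u,v\in A$ one has
$$[uv,x_i]=[u,x_i]\,[v,x_i].$$
This follows from the standard identity $[uv,x_i]=[u,x_i]^{v}[v,x_i]$ once I note that $v$ centralises $[u,x_i]$: indeed $[u,x_i]\in B$, and either $v,[u,x_i]\in A$ with $A$ abelian, or $[u,x_i]\in Z(G)$. In particular $u\mapsto[u,x_i]$ is a homomorphism $A\to B$, so $[u^{-1},x_i]=[u,x_i]^{-1}$. Using this together with commutativity of $B$, I will check that $T$ is a subgroup: two elements of $T$ multiply, after rearranging, to $\prod_i[u_i,x_i][v_i,x_i]=\prod_i[u_iv_i,x_i]\in T$, the inverse is $\prod_i[u_i^{-1},x_i]\in T$, and the identity arises with all $u_i=1$. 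I also place the inverse generators in $T$: for $x_i^{-1}$,
$$[a,x_i^{-1}]=\bigl([a,x_i]^{-1}\bigr)^{x_i^{-1}}=[a^{-1},x_i]^{x_i^{-1}}=\bigl[(a^{-1})^{x_i^{-1}},x_i\bigr]\in T,$$
since $(a^{-1})^{x_i^{-1}}\in A$ by normality.

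To finish I will reduce an arbitrary $[a,g]$ into $T$ by induction on the length of $g$ as a word in the $x_i^{\pm1}$, peeling off the leftmost letter. The engine is the general identity $[a,uv]=[a,u]\,[a^{u},v]$, which I will verify directly; its virtue is that the first entry of the tail is $a^{u}$, which again lies in $A$ by normality. Writing $g=x_{i_1}^{\varepsilon_1}w$ with $w$ shorter, this gives $[a,g]=[a,x_{i_1}^{\varepsilon_1}]\,[a^{x_{i_1}^{\varepsilon_1}},w]$; the first factor lies in $T$ by the previous step and the second lies in $T$ by the inductive hypothesis, so their product lies in $T$. Hence every $[a,g]$ lies in $T$, whence $[A,G]\subseteq T$ and equality follows.

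The step I expect to be the main obstacle is precisely this last reduction. A naive expansion of $[a,g]$ produces conjugates $[a,x_i]^{h}$ whose first entries are pushed out of any controllable position, and in the abelian case—unlike the second-centre case, where all such conjugations are trivial—there is no a priori reason these conjugates lie back in $T$. The whole point of using the left-peeling identity $[a,uv]=[a,u]\,[a^{u},v]$ together with normality of $A$ is to keep every first entry inside $A$, so that the subgroup property of $T$ can absorb the accumulated factors; getting this bookkeeping right, uniformly across the two hypotheses, is the crux.
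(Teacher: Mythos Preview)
The paper does not prove this lemma; it is quoted verbatim from \cite[Lemma~3]{MR} and used as a black box. So there is no ``paper's own proof'' to compare against.

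Your argument is correct. The crucial points all check out: normality of $A$ gives $[A,G]\subseteq A$; in both hypotheses $B=[A,G]$ is abelian, which is what allows the rearrangement needed to show $T$ is closed under multiplication; the identity $[uv,x_i]=[u,x_i]^{v}[v,x_i]$ collapses to $[u,x_i][v,x_i]$ because $v$ centralises $[u,x_i]\in B$; and your left-peeling identity $[a,uv]=[a,u][a^{u},v]$ is indeed valid (a direct computation, distinct from the more commonly quoted $[a,uv]=[a,v][a,u]^{v}$) and has exactly the feature you need, namely that the first slot stays in $A$ by normality. The handling of $x_i^{-1}$ via $[a,x_i^{-1}]=\bigl[(a^{-1})^{x_i^{-1}},x_i\bigr]$ is also fine, using $[u,v]^{w}=[u^{w},v^{w}]$.

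One small presentational remark: your final paragraph describing the ``main obstacle'' reads more like planning notes than proof text; once you have verified the peeling identity and the subgroup property of $T$, the induction is routine and there is no remaining obstacle to flag.
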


\subsection{Free Nilpotent Groups} Let ${\rm N}_{n,r}$ be the free $r$-step nilpotent group of rank $n$ with a basis $x_1, \ldots,
x_n$.
 For example, when $r=1$,
${\rm N}_{n,1}$ is simply the free abelian group generated by $x_1, \ldots, x_n$, so
every element of ${\rm N}_{n,1}$ can be presented uniquely as
 $$g=x_1^{\alpha_1} \ldots x_n^{\alpha_n}$$
 for some integers $\alpha_1, \ldots, \alpha_n$. For $r=2$,  every element $g \in {\rm N}_{n,2}$ has the
form
 \begin{equation}\label{fr1}g=\prod_{i=1}^n x_i^{\alpha_i} \cdot \prod_{1 \leq j < i
\leq n} [x_i, x_j]^{\beta_{ij}} \end{equation}
 for some integers $\alpha_i$  and $\beta_{ij}$, where $[x_i, x_j] = x_i^{-1} x_j^{-1} x_i x_j$
 are basic commutators (see  \cite[Chapter 5]{MKS}).

\medskip For the free nilpotent group ${\rm N}_{n, r} $,  let ${\rm N}_{n, r} '$ be its commutator subgroup. We note the following lemmas that will be used later.
\begin{lemma} \label{ar1} \cite{AR1, MR}
Any element  $g$ in the commutator subgroup ${\rm N}_{n,r}'$ can be
represented in the form
$$
g = [u_1, x_1] \, [u_2, x_2] \, \ldots \, [u_n, x_n],~~~u_i \in {\rm N}_{n,r}.
$$
\end{lemma}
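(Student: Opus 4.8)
The plan is to induct on the nilpotency step $r$, peeling off the top of the lower central series at each stage and using \lemref{reh} to dispose of the error term that appears. Write $G = {\rm N}_{n,r}$, let $\gamma_i = \gamma_i(G)$ be the lower central series, so that $G' = \gamma_2$ and $\gamma_{r+1} = 1$, and set $Z = \gamma_r$, which is central in $G$. I will use the standard fact that, for free nilpotent groups, $G/Z \cong {\rm N}_{n,r-1}$ with the basis $\{x_i\}$ mapping onto a basis, since $G = F_n/\gamma_{r+1}(F_n)$ gives $G/\gamma_r(G) \cong F_n/\gamma_r(F_n)$.

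For the base case $r = 1$ the group is free abelian, so $G' = 1$ and the assertion is vacuous (take all $u_i = 1$). For the inductive step I assume the statement for ${\rm N}_{n,r-1}$ and take $g \in G' = \gamma_2$. Its image $\bar g$ in $G/Z \cong {\rm N}_{n,r-1}$ lies in the commutator subgroup, so by induction $\bar g = [\bar u_1, \bar x_1]\cdots[\bar u_n, \bar x_n]$; lifting the $\bar u_i$ to elements $u_i \in G$ yields
$$g = [u_1, x_1]\,[u_2, x_2]\,\cdots\,[u_n, x_n]\cdot z, \qquad z \in Z.$$

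The crux is to absorb the central correction $z$ into the existing commutators without increasing their number. Here I would use that $Z = \gamma_r = [\gamma_{r-1}, G]$ and that in any class-$r$ nilpotent group $\gamma_{r-1} \subseteq Z_2(G)$, the second center. Hence $A = \gamma_{r-1}$ is a normal subgroup satisfying the hypotheses of \lemref{reh}, which gives $z = [v_1, x_1]\cdots[v_n, x_n]$ with each $v_i \in \gamma_{r-1} \subseteq G$. Since every factor $[v_i, x_i]$ lies in the central subgroup $Z$, I can rearrange the product freely so that $[v_i, x_i]$ sits directly after $[u_i, x_i]$, and then collapse each adjacent pair via the identity $[v_i u_i, x_i] = [v_i, x_i]^{u_i}[u_i, x_i] = [v_i, x_i][u_i, x_i]$, where the second equality uses centrality. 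This produces
$$g = [w_1, x_1]\,[w_2, x_2]\,\cdots\,[w_n, x_n], \qquad w_i = v_i u_i \in G,$$
which closes the induction.

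I expect the absorption step to be the main obstacle: one must check that the error term $z$ has exactly the shape needed to merge one-for-one into the current $n$ commutators. This is precisely where centrality of $Z$ and the inclusion $\gamma_{r-1}\subseteq Z_2(G)$ are indispensable, since they let \lemref{reh} factor $z$ with the same fixed second entries $x_i$ and make the commutator-collection identity fuse $[u_i, x_i][v_i, x_i]$ into a single $[w_i, x_i]$. The remaining work—identifying $Z$ as $[\gamma_{r-1},G]$ and verifying the quotient isomorphism $G/Z\cong {\rm N}_{n,r-1}$—is routine bookkeeping with the lower central series of a free nilpotent group.
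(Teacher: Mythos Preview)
The paper does not actually supply a proof of this lemma; it is stated with a citation to \cite{AR1, MR} and used as a black box. Your inductive argument is correct and is in fact precisely the strategy the paper itself deploys a few lines later in proving \lemref{n2} for the quotient groups $\widetilde{\rm N}_{n,r}$: induct on $r$, use the step-$r{-}1$ case modulo the top term $\gamma_r$, write the central error via \lemref{reh}, and absorb $[u_i,x_i][v_i,x_i]$ into $[u_iv_i,x_i]$ using centrality of $[v_i,x_i]$. So while there is no proof in the paper to compare against, your argument is exactly the one the authors have in mind, applied one level down.

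One cosmetic remark: in your merging step you have the factors in the order $[u_i,x_i][v_i,x_i]$ but your identity produces $[v_iu_i,x_i]=[v_i,x_i][u_i,x_i]$. This is harmless since $[v_i,x_i]$ is central and hence commutes with $[u_i,x_i]$, but you might note that explicitly; the paper's version in \lemref{n2} writes the merged element as $[u_ia_i,y_i]$, which corresponds to applying $[ab,c]=[a,c]^b[b,c]$ with $a=u_i$, $b=v_i$ and using that $[u_i,x_i]^{v_i}=[u_i,x_i]$ because $v_i\in\gamma_{r-1}\subseteq Z_2(G)$.
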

\begin{lemma}\label{c1}  \cite[Corollary 1]{BG} The following
inequalities hold:
$$
{\rm pw}({\rm N}_{n,1}) \leq {\rm pw}({\rm N}_{n,2}) \leq {\rm pw}({\rm N}_{n,3}) \leq \ldots
$$
\end{lemma}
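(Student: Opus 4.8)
The plan is to deduce the entire chain of inequalities from a single structural fact together with \lemref{onto}: for each $r \geq 1$ there is a canonical epimorphism ${\rm N}_{n,r+1} \twoheadrightarrow {\rm N}_{n,r}$ that carries the distinguished basis onto the distinguished basis. Once such a map is in hand, \lemref{onto} immediately gives $\pw({\rm N}_{n,r}, X) \leq \pw({\rm N}_{n,r+1}, X)$, and iterating over $r = 1, 2, 3, \ldots$ produces exactly the stated increasing sequence. Thus the whole statement reduces to exhibiting these maps and checking that each respects the generators.

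To build the epimorphism I would present both groups as quotients of the free group $F_n = \langle x_1, \ldots, x_n \rangle$ by terms of its lower central series. Writing $\gamma_k(F_n)$ for the $k$-th lower central term, one has by definition ${\rm N}_{n,r} = F_n / \gamma_{r+1}(F_n)$ and ${\rm N}_{n,r+1} = F_n / \gamma_{r+2}(F_n)$. Since $\gamma_{r+2}(F_n) \leq \gamma_{r+1}(F_n)$, the identity map of $F_n$ descends to a well-defined surjective homomorphism $\varphi_r \colon F_n/\gamma_{r+2}(F_n) \to F_n/\gamma_{r+1}(F_n)$ with kernel $\gamma_{r+1}(F_n)/\gamma_{r+2}(F_n)$. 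By construction $\varphi_r$ sends the coset of $x_i$ in ${\rm N}_{n,r+1}$ to the coset of $x_i$ in ${\rm N}_{n,r}$, so it maps the basis $X$ of ${\rm N}_{n,r+1}$ onto the basis $X$ of ${\rm N}_{n,r}$.

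It then remains only to apply \lemref{onto} with $G = {\rm N}_{n,r+1}$, $H = {\rm N}_{n,r}$, $\varphi = \varphi_r$, and $Y = X$. This yields $\pw({\rm N}_{n,r}, X) \leq \pw({\rm N}_{n,r+1}, X)$ for every $r \geq 1$, and concatenating these inequalities gives the displayed chain. There is no substantial obstacle here; the only point demanding care is the verification that $\varphi_r$ respects the chosen generating sets, i.e. that $\varphi_r(X) = Y$ in the precise sense required by \lemref{onto}. This is forced by presenting both nilpotent quotients through the same free group $F_n$ on the same generators, so that ``sending basis to basis'' is automatic rather than a choice to be justified. In the write-up I would state this identification explicitly, since it is exactly the hypothesis that makes \lemref{onto} applicable.
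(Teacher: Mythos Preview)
Your proposal is correct. The paper does not supply its own proof of this lemma; it simply quotes it as \cite[Corollary 1]{BG}. Your argument---constructing the canonical epimorphism ${\rm N}_{n,r+1} = F_n/\gamma_{r+2}(F_n) \twoheadrightarrow F_n/\gamma_{r+1}(F_n) = {\rm N}_{n,r}$ that carries the basis to the basis, and then invoking \lemref{onto}---is exactly the standard way to establish this chain of inequalities, and is presumably what the cited reference does as well.
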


\subsubsection{Normal Forms}\label{nof} In \cite[Lemma 3.3]{BG} we have found normal form for palindromes in
$\N_{2,2}$. In this subsection we will find normal form for palindromes in $\N_{n,2}$, $n \geq 2$.

\begin{lemma}\label{nf}
Every palindrome $p \in \N_{n,2}$ has the form
$$p=x_1^{\alpha_1} \ldots x_{i-1}^{\alpha_{i-1}} x_{i+1}^{\alpha_{i+1}} \ldots x_n^{\alpha_n}
\cdot x_i^{\alpha_0} \cdot  x_n^{\alpha_n} \ldots x_{i+1}^{\alpha_{i+1}}x_{i-1}^{\alpha_{i-1}}
\ldots x_1^{\alpha_1},$$
for some integers $\alpha_0, \alpha_1 \ldots, \alpha_n$.
\end{lemma}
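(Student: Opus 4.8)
The plan is to put coordinates on $\N_{n,2}$, work out how word-reversal acts on them, and then compute directly. By \eqnref{fr1}, every $g\in\N_{n,2}$ has a unique normal form $g=\prod_{k=1}^{n}x_k^{c_k(g)}\cdot\prod_{1\le j<k\le n}[x_k,x_j]^{d_{kj}(g)}$, and the commutator subgroup $\N_{n,2}'$ is central; the collection process gives, for $g,h\in\N_{n,2}$ and $j<k$, the rules $c_k(gh)=c_k(g)+c_k(h)$ and $d_{kj}(gh)=d_{kj}(g)+d_{kj}(h)+c_k(g)\,c_j(h)$. Reversing a word $\ell_1\ell_2\cdots\ell_t\mapsto\ell_t\cdots\ell_2\ell_1$ is a well-defined anti-automorphism of the free group fixing each generator, and it descends to an anti-automorphism $\rho$ of $\N_{n,2}$ with $\rho(x_k)=x_k$ (word-reversal preserves the law $[[x,y],z]=1$ defining the variety of $2$-step nilpotent groups). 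Since $\rho$ acts as the identity on the abelianization and sends $[x_k,x_j]$ to $[x_k,x_j]^{-1}$, a short collection computation gives $c_k(\rho(g))=c_k(g)$ and $d_{kj}(\rho(g))=c_k(g)\,c_j(g)-d_{kj}(g)$ for $j<k$; alternatively this last identity can be read off by pairing the occurrences of $x_k$ with those of $x_j$ in a word and in its reverse.

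The key combinatorial observation is the shape of a palindromic word. If $w=\ell_1\ell_2\cdots\ell_m$ satisfies $\ell_s=\ell_{m+1-s}$ for all $s$, then, writing $u=\ell_1\cdots\ell_{\lfloor m/2\rfloor}$ and $\overleftarrow{u}$ for the reverse word, $w=u\,\overleftarrow{u}$ when $m$ is even and $w=u\,x_i^{\pm1}\,\overleftarrow{u}$ when $m$ is odd, $x_i^{\pm1}$ being the middle letter. Hence the element of $\N_{n,2}$ represented by $w$ is $\bar w=\bar u\cdot x_i^{\delta}\cdot\rho(\bar u)$ for some word $u$, some index $i\in\{1,\dots,n\}$, and some $\delta\in\{-1,0,1\}$ (with $\delta=0$ and $i$ arbitrary in the even case). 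Writing $b_k=c_k(\bar u)$ and applying the two product rules together with the formula for $\rho$, the commutator coordinates of $\bar u$ cancel, and one obtains $c_i(\bar w)=2b_i+\delta$ and $c_k(\bar w)=2b_k$ for $k\ne i$, while for $j<k$ one has $d_{kj}(\bar w)=(2b_i+\delta)\,b_j$ when $k=i$, $d_{kj}(\bar w)=(2b_i+\delta)\,b_k$ when $j=i$, and $d_{kj}(\bar w)=2b_kb_j$ when $i\notin\{j,k\}$.

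To conclude, set $\alpha_0=2b_i+\delta$ and $\alpha_k=b_k$ for $k\ne i$, and let $v=x_1^{\alpha_1}\cdots x_{i-1}^{\alpha_{i-1}}x_{i+1}^{\alpha_{i+1}}\cdots x_n^{\alpha_n}$; this word is already in normal form, so $c_k(\bar v)=\alpha_k$ for $k\ne i$, $c_i(\bar v)=0$, and every commutator coordinate of $\bar v$ vanishes. Carrying out the same computation for $\bar v\,x_i^{\alpha_0}\,\rho(\bar v)$ --- which, written out, is exactly $x_1^{\alpha_1}\cdots x_{i-1}^{\alpha_{i-1}}x_{i+1}^{\alpha_{i+1}}\cdots x_n^{\alpha_n}\cdot x_i^{\alpha_0}\cdot x_n^{\alpha_n}\cdots x_{i+1}^{\alpha_{i+1}}x_{i-1}^{\alpha_{i-1}}\cdots x_1^{\alpha_1}$ --- yields exactly the coordinates $c_k(\bar w)$ and $d_{kj}(\bar w)$ found above. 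By uniqueness of the normal form, $\bar w$ equals this element, which is the asserted form.

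The one place where care is needed is the coordinate computations above: fixing the collection-process formula for the commutator convention used here, determining the action of reversal on the commutator coordinates, and noticing the cancellation that makes $\bar w$ depend only on the $b_k$ and on $\delta$. With those in hand, matching the given palindrome to the target word is immediate. One could instead argue by induction on $n$, reducing to the normal form for palindromes in $\N_{2,2}$ established in \cite[Lemma 3.3]{BG}, but the direct computation seems shorter.
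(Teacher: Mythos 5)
Your proof is correct and takes essentially the same route as the paper's: both decompose the palindrome as $u\,x_i^{\delta}\,\overleftarrow{u}$, use that word reversal fixes the generators and inverts the basic commutators $[x_k,x_j]$, and then perform the collection computation in which the commutator coordinates of $u$ cancel against those of its reverse. Your write-up merely packages the paper's explicit collection into coordinate formulas ($c_k$, $d_{kj}$) and finishes by uniqueness of the normal form rather than by collecting the target word directly.
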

\begin{proof}
Any palindrome in $\N_{n, 2}$ by definition is equal to
$$p=ux_i^{\alpha} \bar u, ~ \alpha \in \Z, ~ u \in \N_{n,2},$$
where
$$
u = a_1^{\alpha_1} a_2^{\alpha_2} \ldots a_k^{\alpha_k},~~a_i \in A, \alpha_i
\in \mathbb{Z}
$$
is a word and
$$
\overline{u} = a_k^{\alpha_k} a_{k-1}^{\alpha_{k-1}} \ldots a_1^{\alpha_1}
$$
is its reverse word.

Let $$u=\prod_{j=1}^n x_j^{\alpha_j} \prod_{1 \leq l <k \leq n}  z_{kl}^{\beta_{kl}}, ~\mbox{where}~ z_{kl}=[x_k, x_l], ~ \alpha_j, \beta_{kl} \in \Z$$
be an arbitrary element in $\N_{n, 2}$.
Then
$$
\bar u = \prod_{1 \leq l <k \leq n}  {\bar z}_{kl}^{\beta_{kl}} \prod_{j=0}^{n-1} x_{n-j}^{\alpha_{n-j}},
$$
where $\bar u, ~ \bar z$ are the reverse words of $u, ~ z$ respectively.
Since ${\bar z}_{kl} =x_l x_k x_l^{-1} x_k^{-1} = [x_l^{-1}, x_k^{-1}]=[x_l, x_k]=[x_k, x_l]^{-1} = z_{kl}^{-1}$,
we have
$$
\bar u= \prod_{1 \leq l <k \leq n}  {z}_{kl}^{-\beta_{kl}} \prod_{j=0}^{n-1} x_{n-j}^{\alpha_{n-j}}.
$$
Using the rules
$$
x_i^{\alpha_i} x_j^{\alpha_j} = x_j^{\alpha_j} x_i^{\alpha_i} [x_j, x_i]^{-\alpha_i \alpha_j},
 ~ 1 \leq i < j \leq n,
$$
$$
x_j^{\alpha_j} x_i^{\alpha_i} = x_i^{\alpha_i} x_j^{\alpha_j} [x_j, x_i]^{\alpha_i \alpha_j},
 ~ 1 \leq i < j \leq n,
$$
we can remove elements $x_i^{\alpha_i}$ in the product $ux_i^{\alpha} \bar u$ to the center and cancel the
commutators, see \cite[Section 3.2]{BG} and we see that
$$
p=\prod_{\substack{j=1\\ j\not=i}}^n x_j^{\alpha_j} \cdot x_i^{\alpha_0} \cdot
\prod_{\substack{j=0\\j\not=n-i}}^{n-1} x_{n-j}^{\alpha_{n-j}},~~~\alpha_0 = \alpha + 2 \alpha_i.
$$
This completes the proof.
\end{proof}

Using this Lemma we find the normal form for palindromes in $\N_{n,2}$.

\begin{prop}\label{pnf}
{\it  There are $n$ different types of palindromes in $\N_{n,2}$ that can be written in the following
normal forms.}
$$ p_1=x_1^{\alpha_1} x_2^{2 \alpha_2} \ldots x_n^{2 \alpha_n} \prod_{t=2}^n z_{t1}^{\alpha_t \alpha_1}
 \prod_{2 \leq l < k \leq n} ~z_{kl}^{2 \alpha_k \alpha_l},$$
$$\vdots$$
$$
p_j=x_1^{2 \alpha_1} \ldots x_{j-1}^{2 \alpha_{j-1}} x_j^{\alpha_j} x_{j+1}^{2 \alpha_{j+1}}
\ldots x_n^{2 \alpha_n}\underset{k, l \neq j} { \prod_{1 \leq l < k \leq n}} z_{kl}^{2 \alpha_k \alpha_l}
\cdot \prod_{t=1}^{j-1} ~ z_{jt}^{\alpha_j \alpha_t} \cdot \prod_{s=j+1}^n z_{sj}^{\alpha_s \alpha_j},~~~ 2 \leq j \leq n-1.
$$
$$\vdots$$
$$
p_n=x_1^{2 \alpha_1} \ldots x_{n-1}^{2 \alpha_{n-1}} x_n^{\alpha_n} \prod_{1 \leq l < k \leq n-1}
z_{kl}^{2 \alpha_l \alpha_k} \cdot  \prod_{t=1}^{n-1} z_{nt}^{\alpha_n \alpha_t}.$$
\end{prop}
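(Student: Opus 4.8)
The plan is to deduce \propref{pnf} directly from \lemref{nf} by carrying out the commutator collection in $\N_{n,2}$. By \lemref{nf}, after relabelling the central exponent $\alpha_0$ as $\alpha_j$ (the index $j$ does not otherwise occur in the expression, so there is no clash), every palindrome of ``type $j$'' has the shape
$$p = x_1^{\alpha_1}\cdots x_{j-1}^{\alpha_{j-1}}\,x_{j+1}^{\alpha_{j+1}}\cdots x_n^{\alpha_n}\cdot x_j^{\alpha_j}\cdot x_n^{\alpha_n}\cdots x_{j+1}^{\alpha_{j+1}}\,x_{j-1}^{\alpha_{j-1}}\cdots x_1^{\alpha_1},\qquad 1\le j\le n,$$
which already displays the $n$ types; it remains to rewrite each $p$ in the unique normal form $\bigl(\prod_i x_i^{a_i}\bigr)\cdot\prod_{k>l}z_{kl}^{b_{kl}}$ and read off the exponents.

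First I would record an elementary collection rule, immediate from the centrality of the $z_{kl}$ in $\N_{n,2}$ together with the identities $x_k^{b}x_l^{a}=x_l^{a}x_k^{b}[x_k,x_l]^{ab}$ for $k>l$ already used in the proof of \lemref{nf}: if a word is written as a product of syllables $x_{i_1}^{c_1}\cdots x_{i_m}^{c_m}$, then in its normal form the exponent of $x_i$ is $\sum_{i_s=i}c_s$, and for $k>l$ the exponent of $z_{kl}$ equals $\sum c_sc_t$, the sum taken over all pairs of positions $s<t$ with $i_s=k$ and $i_t=l$ --- i.e.\ over the ``inversions'' in which the larger generator appears earlier. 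A one-line induction on $m$ gives this.

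Applying the rule to the displayed $p$: the letter $x_i$ with $i\ne j$ occurs once in each outer block, so $a_i=2\alpha_i$, while $x_j$ occurs only in the middle syllable, so $a_j=\alpha_j$; this is the $x$-part of every $p_j$. For the commutators I would split into three cases for a pair $k>l$. If $k,l\ne j$, an inversion occurs only inside the decreasing right block (contributing $\alpha_k\alpha_l$) and between the left and right blocks (contributing $\alpha_k\alpha_l$), so $b_{kl}=2\alpha_k\alpha_l$. If $k=j$ (so $l<j$), the single syllable $x_j^{\alpha_j}$ is the head of an inversion only with the occurrence of $x_l$ in the right block, giving $b_{jl}=\alpha_j\alpha_l$. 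If $l=j$ (so $k>j$), the occurrence of $x_k$ in the left block is the head of an inversion over the middle syllable $x_j^{\alpha_j}$, giving $b_{kj}=\alpha_k\alpha_j$. Assembling the three products gives exactly $p_j$, and the cases $j=1$ and $j=n$ are just the degenerate ones in which one of the last two products is empty. There is no real obstacle: the only point requiring care is the direction convention in the collection rule --- only the larger index appearing earlier contributes, and with a plus sign --- which is forced by the commutation identities recorded before \lemref{nf}; the rest is a routine, if slightly tedious, expansion.
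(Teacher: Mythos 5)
Your argument is correct and follows essentially the same route as the paper: both start from the form supplied by \lemref{nf} and collect commutators in $\N_{n,2}$ to reach the normal form, and your three-way case analysis for the exponent of $z_{kl}$ (both indices $\ne j$; $k=j$; $l=j$) reproduces the stated formulas exactly. The only difference is organisational --- you package the collection as a single inversion-counting rule, whereas the paper moves the syllables to the left one at a time.
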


\begin{proof}
We give a proof for the case $p_j$, $2 \leq j \leq n-1$. The cases $p_1$ and $p_n$ are similar and simpler.
By Lemma \ref{nf} we have
$$
p_j = x_1^{\alpha_1} \ldots x_{j-1}^{\alpha_{j-1}} x_{j+1}^{\alpha_{j+1}} \ldots x_n^{\alpha_n}
\cdot x_j^{\alpha_j} \cdot  x_n^{\alpha_n} \ldots x_{j+1}^{\alpha_{j+1}}x_{j-1}^{\alpha_{j-1}}
\ldots x_1^{\alpha_1}.
$$
Remove the element $x_j^{\alpha_j}$ to the left
$$
p_j = x_1^{\alpha_1} \ldots x_{j-1}^{\alpha_{j-1}} x_j^{\alpha_j} x_{j+1}^{\alpha_{j+1}} \ldots x_n^{2 \alpha_n}
 \ldots x_{j+1}^{\alpha_{j+1}} x_{j-1}^{\alpha_{j-1}}
\ldots x_1^{\alpha_1} \cdot z_{nj}^{\alpha_n \alpha_j} z_{n-1,j}^{\alpha_{n-1} \alpha_j} \ldots
z_{j+1,j}^{\alpha_{j+1} \alpha_j}.
$$
Removing the right occurrence of $x_1^{\alpha_1}$ to the left
$$
p_j = x_1^{2 \alpha_1}  \ldots x_{j-1}^{\alpha_{j-1}} x_j^{\alpha_j} x_{j+1}^{\alpha_{j+1}}
\ldots x_n^{2 \alpha_n}
 \ldots x_{j+1}^{\alpha_{j+1}} x_{j-1}^{\alpha_{j-1}}
\ldots x_2^{\alpha_2} \cdot \prod_{t=j+1}^{n} z_{tj}^{\alpha_t \alpha_j} \cdot z_{j1}^{\alpha_j \alpha_1}
\cdot
\underset{t \neq j} { \prod_{t=2}^n} z_{t1}^{2 \alpha_t \alpha_1}.
$$
By  the similar manner removing the right occurrence of $x_2^{\alpha_2},\ldots, x_{j-1}^{\alpha_{j-1}}$
to the left we get
$$
p_j = x_1^{2 \alpha_1} \ldots x_{j-1}^{2 \alpha_{j-1}} x_j^{\alpha_j} \left(
x_{j+1}^{\alpha_{j+1}} \ldots x_n^{2 \alpha_n}
 \ldots x_{j+1}^{\alpha_{j+1}} \right)  c$$
where 
 $$
 c=\prod_{t=j+1}^{n} z_{tj}^{\alpha_t \alpha_j} \cdot  \prod_{l=1}^{j-1} z_{jl}^{\alpha_j \alpha_l}
\cdot
\underset{t \neq j} { \prod_{t=2}^n} z_{t1}^{2 \alpha_t \alpha_1}
\underset{t \neq j} { \prod_{t=3}^n} z_{t2}^{2 \alpha_t \alpha_2}
\ldots
{ \prod_{t=j+1}^n} z_{t,j-1}^{2 \alpha_t \alpha_{j-1}}.
$$
Represent the expression in the brackets in the normal form
$$
x_{j+1}^{\alpha_{j+1}} \ldots x_n^{2 \alpha_n}
 \ldots x_{j+1}^{\alpha_{j+1}} = x_{j+1}^{2\alpha_{j+1}} x_{j+2}^{2\alpha_{j+2}} \ldots x_n^{2 \alpha_n}
\cdot
{ \prod_{j+1 \leq l < k \leq n}} z_{kl}^{2 \alpha_k \alpha_l}.
$$
Hence
$$
p_j = x_1^{2 \alpha_1}  \ldots x_{j-1}^{2 \alpha_{j-1}} x_j^{\alpha_j}
x_{j+1}^{2\alpha_{j+1}} \ldots x_n^{2 \alpha_n}
 \cdot
 \prod_{t=j+1}^{n} z_{tj}^{\alpha_t \alpha_j} \cdot  \prod_{l=1}^{j-1} z_{jl}^{\alpha_j \alpha_l}
\cdot
\underset{k \neq j} {\underset{l<k\leq n} {\prod_{1\leq l \leq j-1}}} z_{kl}^{2 \alpha_k \alpha_l}
\cdot   \prod_{j+1\leq l <k \leq n} z_{kl}^{2 \alpha_k \alpha_l}.
$$
We see that this expression is equal to the needed formula.
\end{proof}

\section{Proof of \thmref{nn2}}\label{nil}
Now consider the group $\widetilde{\rm N}_{n,r}=\N_{n,r}/\langle  \langle x_1^2, \cdots, x_n^2 \rangle \rangle$.
Let $ z_{ij}=[y_i,  y_j]$, $1 \leq j < i \leq n$.
For $i=1, \ldots, n$,  $y_i= y_i^{-1}$ in $\widetilde{\rm N}_{n, r}$.

\medskip In this Section we prove:.

\begin{theorema}
\begin{enumerate}
\item For $n, r \geq 2$, $n \leq {\rm pw}(\widetilde{\rm N}_{n,r}) \leq 2n$.
\item The palindromic width of $\widetilde{\rm N}_{n, 2}$ is $2(n-1)$.
\end{enumerate}
\end{theorema}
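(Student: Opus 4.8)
The plan is to handle all three statements by processing one generator index at a time, exploiting that in $\n_{n,r}$ each $y_i$ equals its own inverse, so that $y_i$ is a palindrome and $[w,y_i]=w^{-1}y_iwy_i=w^{-1}\cdot(y_iwy_i)$ for every $w$, with $y_iwy_i$ again a palindrome whenever $w$ is. For the lower bound $\pw(\n_{n,r})\ge n$ I would argue directly: a palindrome is represented by a word $u\,y_k^{\alpha}\,\bar u$ (with $\bar u$ the reverse of $u$), hence its image in the abelianization $(\mathbb Z/2)^n$ is $2\overline u+\alpha e_k\equiv\alpha e_k$, i.e. it maps to $0$ or to a single standard basis vector $e_k$, its \emph{type}. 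So if $y_1y_2\cdots y_n=p^{(1)}\cdots p^{(k)}$ with non-trivial palindromes $p^{(s)}$ of type $j_s$, then $e_1+\cdots+e_n=\sum_s e_{j_s}$ forces every index to occur among the $j_s$, whence $k\ge n$ and $\l(y_1\cdots y_n)\ge n$.

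For the upper bound $\pw(\n_{n,r})\le 2n$ one starts from $\pw(\n_{n,r})\le\pw(\N_{n,r})\le 3n$ (\lemref{onto} and \cite[Theorem~1.1]{BG}) and must save a factor. Write $g=y_1^{a_1}\cdots y_n^{a_n}\cdot v$ with $v\in\n_{n,r}'$; by \lemref{ar1} transported along $\N_{n,r}\twoheadrightarrow\n_{n,r}$ one has $v=[v_1,y_1]\cdots[v_n,y_n]$, and by the usual absorption argument (cf. the proof of \lemref{ar1}) this can be rewritten as $g=\prod_{i=1}^n\big([v_i,y_i]\,y_i^{a_i}\big)$. Since $[v_i,y_i]\,y_i^{a_i}=v_i^{-1}y_iv_i\,y_i^{1+a_i}$, a slot with $a_i=1$ equals $v_i^{-1}y_iv_i$, a conjugate of the palindrome $y_i$, hence a product of $2$ palindromes by \lemref{l:32}(2); and a slot with $a_i=0$ equals $v_i^{-1}(y_iv_iy_i)$, which is a product of $2$ palindromes whenever $v_i$ may be taken to be a palindrome. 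The step that needs real work — and which I expect to be the main obstacle — is to arrange the factorization so that \emph{every} slot costs at most $2$ palindromes: one must be able to replace the $v_i$ in the $a_i=0$ slots by palindromes after redistributing the commutator content those $v_i$ can then no longer carry, or equivalently to prove (the analogue for general $r$ of \propref{pnf}) that the ``index $i$ part'' of an arbitrary element is a product of at most two palindromes of type $i$. Given this, $g$ is a product of at most $2n$ palindromes.

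For $r=2$ this becomes transparent. \propref{pnf} descends to a complete list of palindromes of $\n_{n,2}$: the non-trivial ones are precisely the elements $y_j\prod_{t\ne j}z_{\{j,t\}}^{\varepsilon_t}$, $\varepsilon_t\in\{0,1\}$ (one family of type $j$ for each $j$), $z_{\{j,t\}}$ being the basic commutator on indices $j,t$. Peeling off the index $n$: putting $g$ in the normal form \eqnref{fr1} and using that all $z_{ij}$ are central in $\n_{n,2}$, we get $g=g''p$ with $g''\in\langle y_1,\dots,y_{n-1}\rangle$ — a retract of $\n_{n,2}$ isomorphic to $\n_{n-1,2}$ — and $p=y_n^{a_n}\prod_{t<n}z_{nt}^{b_{nt}}$. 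If $a_n=1$ then $p$ is a single palindrome; if $a_n=0$ then $p=\prod_{t<n}z_{nt}^{b_{nt}}$ is a product of two palindromes of type $n$, since $(y_nc)(y_nc')=cc'$ for central $c,c'$. Hence $\pw(\n_{n,2})\le\pw(\n_{n-1,2})+2$, and since $\n_{2,2}$ is dihedral of order $8$ with $\pw(\n_{2,2})=2$ (a direct check), induction gives $\pw(\n_{n,2})\le 2(n-1)$.

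For the matching lower bound, in $\n_{n,2}$ take $\Delta_n=\prod_{1\le j<i\le n}z_{ij}$ (all basic commutators; the order is immaterial since they are central). Suppose $\Delta_n=p^{(1)}\cdots p^{(k)}$ with non-trivial palindromes $p^{(s)}=y_{j_s}c^{(s)}$, where by the list above $c^{(s)}$ is a product of basic commutators involving the index $j_s$. Abelianizing kills $\Delta_n$, so each index occurs among the $j_s$ an even number of times; hence every type that occurs occurs at least twice. Moreover $\Delta_n=(y_{j_1}\cdots y_{j_k})\prod_s c^{(s)}$, and bringing $y_{j_1}\cdots y_{j_k}$ into normal form produces only commutators $z_{ab}$ with both $a$ and $b$ occurring among the $j_s$; so if two indices $a,b$ were both absent, the exponent of $z_{ab}$ on the right-hand side would be $0$, contradicting its exponent $1$ in $\Delta_n$. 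Thus at least $n-1$ indices occur as types, each at least twice, giving $k\ge 2(n-1)$, i.e. $\l(\Delta_n)\ge 2(n-1)$; together with the upper bound this gives $\pw(\n_{n,2})=2(n-1)$. Finally, composing with the epimorphism $\n_{n,r}\twoheadrightarrow\n_{n,2}$ and \lemref{onto} also gives $\pw(\n_{n,r})\ge 2(n-1)\ge n$, completing (1).
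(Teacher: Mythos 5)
Your treatment of part (2) is essentially sound and close to the paper's: the upper bound by peeling off one generator at a time (the paper gets $2(n-1)$ by imitating \cite[Lemma 3.6]{BG}), and the lower bound on $\l\bigl(\prod_{l<k}z_{kl}\bigr)$ from the normal forms of \propref{pnf}. Your parity bookkeeping (each type occurs an even number of times by abelianization, and at least $n-1$ types must occur because $z_{ab}$ cannot be produced if both $a$ and $b$ are absent) is a valid and slightly cleaner variant of the paper's argument, which instead notes that with fewer than $2(n-1)$ factors some type occurs exactly once, so the product cannot lie in $\n_{n,2}'$.

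The genuine gap is in the upper bound ${\rm pw}(\n_{n,r})\le 2n$ for $r\ge 3$, and you have also misplaced where the difficulty sits. The interleaved factorization $g=\prod_{i=1}^n[v_i,y_i]\,y_i^{a_i}$ is \emph{not} obtained by ``the usual absorption argument'': for $r\ge 3$ the commutators $[v_i,y_i]$ are not central, so you cannot slide the letters $y_i^{a_i}$ in between the factors supplied by \lemref{ar1}. The paper proves this factorization by induction on the nilpotency class, using \lemref{reh} to write the error term $d\in\gamma_{r+1}$ as $\prod_i[a_i,y_i]$ with $a_i\in\gamma_r$; these \emph{are} central and can be absorbed into the existing commutators via $[u_i,y_i][a_i,y_i]=[u_ia_i,y_i]$. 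This induction is the missing ingredient. On the other hand, the obstacle you do identify is illusory: once the factorization is in hand, each slot costs at most two palindromes with \emph{no} condition on $v_i$. You grouped $[v_i,y_i]y_i^{0}$ as $v_i^{-1}\cdot(y_iv_iy_i)$, which indeed requires $v_i$ to be a palindrome; group it instead as $(\overline{v_i}\,y_i\,v_i)\cdot y_i$, noting that in a group generated by involutions $v_i^{-1}$ is represented by the reversed word $\overline{v_i}$, so $\overline{v_i}\,y_i\,v_i$ is a palindrome for arbitrary $v_i$. This is exactly \lemref{cmm}(1), and it makes every slot cost at most $2$ unconditionally. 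So the problem you flag as ``the main obstacle'' is already solved, while the step you dismiss as routine is where the actual work (the induction on $r$) lives.
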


The first part of this Theorem follows from the following assertion.

\begin{lemma}\label{n2}
For $n, r \geq 2$, $n \leq {\rm pw}(\widetilde{\rm N}_{n,r}) \leq 2n$.\end{lemma}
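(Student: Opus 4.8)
The plan is to establish the two inequalities separately. For the lower bound $n \leq \pw(\widetilde{\rm N}_{n,r})$, I would exhibit a family of elements whose palindromic length grows with $n$. Since there is a natural epimorphism $\widetilde{\rm N}_{n,r} \twoheadrightarrow \widetilde{\rm N}_{n,1}$ (killing the lower central series past step $1$), and $\widetilde{\rm N}_{n,1}$ is just the elementary abelian group $(\Z/2)^n$ with basis $y_1,\ldots,y_n$, by \lemref{onto} it suffices to show $\pw(\widetilde{\rm N}_{n,1}, Y) \geq n$. In $(\Z/2)^n$ every palindrome $p = u y_i^{\alpha} \bar u$ collapses (the group is abelian, $y_j^2 = 1$) to a single generator $y_i$ or to the identity; hence each palindrome is one of $y_1, \ldots, y_n$ or trivial, and a product of $k$ palindromes involves at most $k$ distinct generators. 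The element $y_1 y_2 \cdots y_n$ therefore needs at least $n$ factors, giving $\pw(\widetilde{\rm N}_{n,1}) \geq n$ and the lower bound follows. (One should also check, via \lemref{c1} or directly, monotonicity so the bound is uniform in $r$; in fact the displayed argument gives it for every $r$ at once.)

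For the upper bound $\pw(\widetilde{\rm N}_{n,r}) \leq 2n$, I would argue as follows. Let $g \in \widetilde{\rm N}_{n,r}$ be arbitrary. Using the abelianization map $\widetilde{\rm N}_{n,r} \to (\Z/2)^n$, write $g = y_{i_1} \cdots y_{i_t} \cdot c$ where $c$ lies in the commutator subgroup $\widetilde{\rm N}_{n,r}'$; the leading word, after reordering and reducing exponents mod $2$, can be taken to be a product of at most $n$ distinct generators, i.e. $t \leq n$, and since each $y_i$ is itself a palindrome (indeed $y_i = y_i^{-1}$), this part costs at most $n$ palindromes. It then remains to show that every element of the commutator subgroup $\widetilde{\rm N}_{n,r}'$ is a product of at most $n$ palindromes. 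For this I would invoke \lemref{reh} (or \lemref{ar1} applied after the quotient): the commutator subgroup $\widetilde{\rm N}_{n,r}'$ is contained in $[A, G]$ for a suitable normal subgroup $A$, so every $c \in \widetilde{\rm N}_{n,r}'$ has the form $c = [u_1, y_1][u_2, y_2] \cdots [u_n, y_n]$. Now the key point is that each factor $[u_i, y_i]$ is itself a palindrome in $\widetilde{\rm N}_{n,r}$: since $y_i = y_i^{-1}$, we have $[u_i, y_i] = u_i^{-1} y_i^{-1} u_i y_i = u_i^{-1} y_i u_i y_i$, and writing $u_i = w$ this is $\overline{w}^{\,-1}\cdots$ — more carefully, one checks that $[u_i,y_i] = u_i^{-1} y_i u_i y_i$ and, using $y_i^2=1$ to insert $y_i$ in the middle, this word reads as a palindrome. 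Thus $c$ is a product of at most $n$ palindromes, and $g$ is a product of at most $2n$ palindromes.

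The main obstacle I anticipate is the last step: verifying cleanly that $[u_i, y_i]$ (equivalently $u_i^{-1} y_i u_i y_i$) really is a palindrome in $\widetilde{\rm N}_{n,r}$, and more generally controlling how the reordering of the leading generators $y_{i_1}\cdots y_{i_t}$ interacts with the commutator part $c$ — in the nilpotent quotient, commuting generators past each other introduces new commutators into $c$, so one must be sure these extra commutators are absorbed without increasing the count beyond $n$. The honest way to handle this is to first push all the generator letters of $g$ to the left (collecting the resulting commutators into the tail), reduce the leading exponents mod $2$ so that the leading part is $\prod_{i\in S} y_i$ for some $S \subseteq \{1,\ldots,n\}$ — a product of $|S| \leq n$ palindromes — and then apply the commutator decomposition to whatever lies in $\widetilde{\rm N}_{n,r}'$, each of whose $\le n$ factors is a palindrome by the observation above. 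This gives the bound $|S| + n \le 2n$. One can likely do slightly better when $S \neq \{1,\ldots,n\}$, but $2n$ suffices for the lemma; the sharper value $2(n-1)$ for $r=2$ is handled separately using the normal forms of \propref{pnf}.
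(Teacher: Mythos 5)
Your lower bound is fine and is exactly the paper's argument: project onto $\widetilde{\rm N}_{n,1}\cong(\Z/2)^n$ and use ${\rm pw}(\widetilde{\rm N}_{n,1})=n$ together with Lemma~\ref{onto}. The upper bound, however, rests on a false claim: $[u_i,y_i]$ is \emph{not} in general a palindrome in $\widetilde{\rm N}_{n,r}$. As a word, $[u,y]=u^{-1}y^{-1}uy=\overline{u}\,y\,u\,y$ reads backwards as $y\,\overline{u}\,y\,u$, a cyclic shift that represents a different group element in general; and one cannot repair this by choosing another representative, because by Proposition~\ref{pnf} every palindrome of $\widetilde{\rm N}_{n,2}$ has normal form $y_j^{\alpha_j}\prod z_{**}^{\alpha_j\alpha_*}$, so a palindrome lying in the commutator subgroup forces $\alpha_j$ even and hence is trivial. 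Your claim would make every element of $\widetilde{\rm N}_{n,2}'$ a product of at most $n$ palindromes, contradicting Lemma~\ref{lem2}, which shows $\prod_{1\le l<k\le n}z_{kl}$ needs at least $2(n-1)>n$ palindromes once $n\ge 3$. What is actually true (Lemma~\ref{cmm}) is that $[u,y]\,y^{\alpha}=\overline{u}\,y\,u\cdot y^{1+\alpha}$ is a product of \emph{two} palindromes (a single palindrome when $\alpha=1$).

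Even after this correction your decomposition $g=\bigl(\prod_{i\in S}y_i\bigr)\cdot c$ with $c\in\widetilde{\rm N}_{n,r}'$ only yields $|S|+2n\le 3n$, which misses the stated bound. The paper avoids this loss by proving, via induction on $r$ using Lemma~\ref{ar1} for the base case and Lemma~\ref{reh} plus centrality of the top commutators for the inductive step, the \emph{interleaved} normal form
$$
g=[u_1,y_1]\,y_1^{\alpha_1}\,[u_2,y_2]\,y_2^{\alpha_2}\cdots[u_n,y_n]\,y_n^{\alpha_n},\qquad \alpha_i\in\{0,1\},
$$
in which each block $[u_i,y_i]\,y_i^{\alpha_i}$ absorbs its generator and costs only two palindromes, giving the total $2n$. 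So the missing idea is precisely this absorption of $y_i^{\alpha_i}$ into the adjacent commutator; your instinct that the step ``$[u_i,y_i]$ is a palindrome'' was the weak point of the argument was correct.
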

\begin{proof} We proved in \cite{BG} that ${\rm pw}(\widetilde{\rm N}_{n,1})=n$. Hence, the left hand side
inequality holds.

We claim that any element $g$ in $\widetilde{\rm N}_{n,r}$, $r \geq 2$, can be represented in the form
$$g=[u_1, y_1]y_1^{\alpha_1} [u_2, y_2] y_2^{\alpha_2} \ldots [u_n, y_n] \, y_n^{\alpha_n}, ~
\alpha_i \in \{0, 1\}.$$
We shall use induction on $r$.  If $r=2$ and $g \in \widetilde{\rm N}_{n,2}$ then it follows from  \hbox{\lemref{ar1}} that
$$
g = y_1^{\alpha_1}\, y_2^{\alpha_2}\, \ldots \, y_n^{\alpha_n} \, [u_1, y_1] \, [u_2, y_2] \, \ldots \,
[u_n, y_n],~~~\alpha_i \in \{0, 1\},~~u_i \in \widetilde{\rm N}_{n,2}.
$$
But the commutators $[u_i, y_i],$ $i =1, 2, \ldots, n,$ lie in the center of $\widetilde{\rm N}_{n,2}$. Hence
$$
g = [u_1, y_1] \, y_1^{\alpha_1}\, [u_2, y_2]\, y_2^{\alpha_2}\,  \ldots \,
[u_n, y_n] \, y_n^{\alpha_n}, ~ \alpha_i \in \{0, 1\}.
$$
has the required form. Let the result holds for groups  $\widetilde{\rm N}_{n,r}$.  We claim that the result also
holds for $\widetilde{\rm N}_{n,r+1}$.  Let
$\Gamma = \gamma_{r+1}(\widetilde{\rm N}_{n,r+1}) = [\gamma_r(\widetilde{\rm N}_{n,r+1}), \widetilde{\rm N}_{n,r+1}]$.
Then an element $g$ of $\widetilde{\rm N}_{n,r+1}$
has the form
$$
g = [u_1, y_1] \, y_1^{\alpha_1}\, [u_2, y_2]\, y_2^{\alpha_2}\,  \ldots \,
[u_n, y_n] \, y_n^{\alpha_n} \, d
$$
for some $d \in \Gamma$, $\alpha_i \in \{0,1\}$.  It follows from \lemref{reh},
$$
d = [a_1, y_1] \, [a_2, y_2] \, \ldots \, [a_n, y_n],~~\mbox{for some}~ a_i \in \gamma_r(\widetilde{\rm N}_{n,r+1}).
$$
Since all $[a_i, y_i]$ lie in the center of $\widetilde{\rm N}_{n,r+1}$, hence
$$
g =
[u_1, y_1] \, [a_1, y_1] \, y_1^{\alpha_1}\, [u_2, y_2]\, [a_2, y_2] \, y_2^{\alpha_2}\,  \ldots \,
[u_n, y_n] \, [a_1, y_1]  \, y_n^{\alpha_n} 
$$
$$
= [u_1 a_1, y_1] \,  y_1^{\alpha_1}\, [u_2 a_2, y_2]\,  y_2^{\alpha_2}\,  \ldots \,
[u_n a_n, y_n]   \, y_n^{\alpha_n}
$$
has the required form.

By  \lemref{cmm}, any element $[u_i, y_i] \, y_i^{\alpha_i}$ is a product of 2 palindromes and $g$ is a
product of $2n$ palindromes.
\end{proof}

 The following lemma follows by imitating the proof of \cite[Lemma 3.6]{BG} and using \lemref{cmm}.

\begin{lemma}\label{n3}
Any element in $\widetilde{\rm N}_{n,2},$ $n \geq 2$ is a product of at most $2(n-1)$ palindromes.
\end{lemma}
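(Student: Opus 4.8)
The plan is to mimic the structure of the proof of \cite[Lemma 3.6]{BG}, which presumably bounds $\pw(\N_{n,2},X)$ by $3(n-1)$, but now exploiting the two advantages we have in the quotient $\n_{n,2}$: first, every generator $y_i$ is an involution, so a factor $y_i^{\alpha_i}$ contributes only when $\alpha_i=1$; second, and more importantly, by \lemref{cmm} a single-generator expression $[u_i,y_i]\,y_i^{\alpha_i}$ is a product of only $2$ palindromes rather than the $3$ needed in the full free nilpotent group (Lemma \ref{l:32}(3)). So the gain of $1$ per generator that was already visible in going from $3n$ to $3(n-1)$ should combine with the involution structure to bring the total from $2n$ down to $2(n-1)$.

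Concretely, I would start from the normal form produced in the proof of \lemref{n2}: every $g\in\n_{n,2}$ can be written as
$$g=[u_1,y_1]\,y_1^{\alpha_1}\,[u_2,y_2]\,y_2^{\alpha_2}\,\cdots\,[u_n,y_n]\,y_n^{\alpha_n},\qquad \alpha_i\in\{0,1\}.$$
Since all the $[u_i,y_i]$ are central in $\n_{n,2}$, I can freely regroup. Using \lemref{cmm} each block $[u_i,y_i]\,y_i^{\alpha_i}$ is a product of two palindromes, say $p_i q_i$; this already gives $2n$. To shave off two, I would handle the first and last generators specially. The key observation is that the \emph{first} palindrome $p_1$ of the first block and the \emph{last} palindrome $q_n$ of the last block can each be absorbed: a product $p_1 \cdot w$ where $p_1$ is a palindrome can be rewritten, up to moving central commutator pieces around and using $y_i=y_i^{-1}$, so that one palindrome on the boundary is merged into its neighbour, or is itself trivial after the central parts have been collected elsewhere. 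More precisely, I expect to show that the accumulated central commutator content of the whole word can be redistributed among the $2(n-1)$ palindromes $q_1p_2, q_2p_3,\dots$ in the "middle" of the word, so that the two extreme palindromes are forced to be trivial and may be dropped. This is exactly the bookkeeping that the proof of \cite[Lemma 3.6]{BG} does, and the involution relations only make the cancellation of the $y_i$'s cleaner.

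The main obstacle is the combinatorial bookkeeping of the commutators $z_{kl}=[y_k,y_l]$: when one slides a palindrome past a generator power, one picks up central correction terms $z_{kl}^{\pm\alpha_k\alpha_l}$ (with the coefficients now only mattering mod the relations forced by $y_i^2=1$), and one must verify that after all the regrouping the leftover central element can indeed be written using only the $2(n-1)$ interior palindromic factors and that the two boundary factors really do become trivial. I would organize this by induction on $n$: peel off the generator $y_n$ (its block is $2$ palindromes, one of which merges with the $y_{n-1}$-block), apply the inductive hypothesis $2(n-2)$ to what remains in $\n_{n-1,2}$ extended by the central commutators $z_{nl}$, and check the arithmetic of the merge gives $2(n-2)+2=2(n-1)$ with the base case $n=2$ being $\pw(\n_{2,2},Y)\le 2$, which is covered by \lemref{cmm} applied to the single block $[u_2,y_2]y_2^{\alpha_2}$ together with $[u_1,y_1]y_1^{\alpha_1}$ collapsing (the $n=2$ case should already be essentially \cite[Lemma 3.3]{BG} in the quotient). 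I expect the write-up to consist mostly of carefully tracking these central correction factors; no genuinely new idea beyond \cite[Lemma 3.6]{BG} and \lemref{cmm} is needed.
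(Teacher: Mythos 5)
Your closing inductive scheme is a workable route and is essentially what ``imitating \cite[Lemma 3.6]{BG}'' comes down to: write $g=g'\cdot[u_n,y_n]\,y_n^{\alpha_n}$, where $u_n$ is chosen so that $[u_n,y_n]$ carries \emph{all} the central commutators $z_{nl}$, $l<n$ (possible in class $2$ because $[\,\prod_l y_l^{\varepsilon_l},y_n]=\prod_l z_{nl}^{-\varepsilon_l}$), so that $g'$ lies in the copy of $\n_{n-1,2}$ generated by $y_1,\dots,y_{n-1}$; then \lemref{cmm} gives $2$ for the peeled block and the inductive hypothesis gives $2(n-2)$ for $g'$. However, the mechanism you describe in your middle paragraph is not correct and should be discarded: the first palindrome $\overline{u_1}\,y_1\,u_1$ of the block $[u_1,y_1]y_1^{\alpha_1}$ is \emph{never} trivial (it maps to $y_1\neq 1$ in the abelianization), and a product such as $q_1p_2$ of palindromes of two different types is not itself a palindrome, so there is no sense in which ``the two extreme palindromes are forced to be trivial'' after redistributing central content. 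The genuine source of the saving is that one entire commutator block can be omitted, because every basic commutator $z_{kl}$ involves two indices and can therefore be assigned to the block of either index --- not a cancellation at the two ends of the word.

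The second concrete gap is the base case. Your justification yields only $g=y_1^{\alpha_1}\cdot[u_2,y_2]y_2^{\alpha_2}$, which is a product of $1+2=3$ palindromes, not $2$. To get $\pw(\n_{2,2},Y)\leq 2$ you need to choose \emph{which} generator carries the commutator according to the exponents: if $\alpha_1=0$, then \lemref{cmm} applied to $[u_2,y_2]y_2^{\alpha_2}$ gives $\leq 2$ directly; if $\alpha_1=\alpha_2=1$, then $[u_2,y_2]y_2=\overline{u_2}\,y_2\,u_2$ is a single palindrome and $g=y_1\cdot\overline{u_2}\,y_2\,u_2$ is a product of two; if $\alpha_1=1$, $\alpha_2=0$, put the commutator into the $y_1$-block instead, so $g=[u_1,y_1]y_1=\overline{u_1}\,y_1\,u_1$ is a single palindrome. (Equivalently, check the eight elements $y_1^{a}y_2^{b}z_{21}^{c}$ directly.) With the base case repaired and the ``boundary absorption'' paragraph replaced by the omit-one-block observation, the induction goes through.
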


%\begin{theorema}
%The palindromic width of $\n_{n, 2}$ is $2(n-1)$.
%\end{theorema}
To prove that the palindromic width of $\widetilde{\rm N}_{n,2}$ is at least $2(n-1)$, it is enough to find some
element in $\widetilde{\rm N}_{n,2}$ that can not be represented as a product of less than $2(n-1)$ palindromes.
To do this we introduce some notations. Let
$$Bas_n=\{ z_{kl} \ | \ 1 \leq l < k \leq n\}$$
is the set of all basis commutators of weight 2 in $\widetilde{\rm N}_{n,2}$. Any palindrome in $\widetilde{\rm N}_{n,2}$ has a
normal form that is the image of the normal forms of palindromes of $\N_{n,2}$ obtained in
Proposition \ref{pnf}. We shall use the same symbol $p_1, p_2, \ldots, p_n$ to denote the normal forms in
$\widetilde{\rm N}_{n,2}$. We have
$$ p_1=y_1^{\alpha_1} z_{21}^{\alpha_1 \alpha_2} z_{31}^{\alpha_1 \alpha_3} \ldots
z_{n1}^{\alpha_1 \alpha_n},$$
$$\vdots$$
$$
p_j= y_j^{\alpha_j}
\cdot \prod_{t=1}^{j-1} ~ z_{jt}^{\alpha_j \alpha_t} \cdot \prod_{s=j+1}^n z_{sj}^{\alpha_s \alpha_j},~~~
2 \leq j \leq n-1,
$$
$$\vdots$$
$$
p_n= y_n^{\alpha_n}  \cdot  \prod_{t=1}^{n-1} z_{nt}^{\alpha_n \alpha_t}.$$

 If $w\in \widetilde{\rm N}_{n,2}$ is some element that is represented in the normal form,
 then denote by $b(w)$ the set of basis commutators of weight 2 those are in this normal form.  For example
$$b(p_1)=\{z_{21}, z_{31}, \ldots, z_{n1}\},$$
$$\vdots$$
$$b(p_j)=\{z_{j1}, \ldots, z_{j, j-1}, z_{nj}, \ldots, z_{j+1, j}\}, ~ 2 \leq j \leq n-1.$$
$$\vdots$$
$$b(p_n)=\{z_{n1}, \ldots, z_{n,n-1}\}.$$
If $w_1, \ldots, w_k \in \widetilde{\rm N}_{n,2}$ are represented in the normal form, then denote
$$b(w_1, \ldots, w_k)=\bigcup_{i=1}^k b(w_i).$$
\begin{lemma}\label{lem1}
\begin{enumerate}
\item $b(p_1, \ldots, p_n)=Bas_n$.

\medskip \item For arbitrary $i$, $1 \leq i \leq n$, $ b(p_1, \ldots, p_n)-b(p_i)=Bas_n$.

\medskip \item $b(p_1, \ldots, p_n)-(b(p_i) \cup b(p_j)) \neq Bas_n$ if $1 \leq i < j \leq n$.
\end{enumerate}
\end{lemma}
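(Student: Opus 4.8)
The plan is to reduce all three assertions to an elementary statement about the complete graph $K_n$. First I would identify the set $Bas_n$ of weight-$2$ basis commutators with the edge set of $K_n$ on the vertex set $\{1,\dots,n\}$, letting $z_{kl}$ (for $1\le l<k\le n$) correspond to the edge $\{k,l\}$. The key preliminary observation, read off from the three displayed normal forms of $p_1$, of $p_j$ for $2\le j\le n-1$, and of $p_n$, is that in every case
$$b(p_j)=\{\,z_{kl}\ :\ 1\le l<k\le n,\ j\in\{k,l\}\,\},$$
i.e. $b(p_j)$ is precisely the \emph{star} at the vertex $j$. Indeed, for $2\le j\le n-1$ the normal form of $p_j$ involves exactly the commutators $z_{jt}$ with $t<j$ together with the commutators $z_{sj}$ with $s>j$, which are the edges $\{j,t\}$ and $\{s,j\}$; and $p_1$, $p_n$ are just the boundary instances $j=1$ (edges $\{k,1\}$) and $j=n$ (edges $\{n,t\}$) of the same description. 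I expect this bookkeeping — checking that the formula for $b(p_j)$ is uniform and that the two end types are not exceptions — to be the only place where any care is needed; everything after it is formal, and it has essentially been done already in Proposition \ref{pnf} and in the displayed normal forms preceding the lemma.

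Once the star description is in place, I would prove the three parts in turn. For (1), every edge $\{k,l\}$ lies in the star at $k$, so $\bigcup_{j=1}^{n} b(p_j)$ is the whole edge set, which is $Bas_n$. For (2), interpreting $b(p_1,\dots,p_n)-b(p_i)$ as the union $\bigcup_{k\ne i} b(p_k)$ of the bases of the list with the entry $p_i$ deleted, I would note that every edge has two distinct endpoints, so at least one of them differs from $i$; hence each edge lies in some star at a vertex $\ne i$, giving $\bigcup_{k\ne i} b(p_k)=Bas_n$ (in the extreme case $n=2$ the single surviving type already has star equal to $Bas_2$). For (3), with $1\le i<j\le n$, I would single out the edge $\{i,j\}$, i.e. the commutator $z_{ji}$: both of its endpoints lie in $\{i,j\}$, so it belongs to no star at a vertex $m\notin\{i,j\}$; therefore $\bigcup_{m\ne i,j} b(p_m)=Bas_n\setminus\{z_{ji}\}$, a proper subset of $Bas_n$.

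Finally I would record the intended use of the statement, for orientation rather than as part of the proof: part (3), together with the obvious monotonicity in the index set, forces any product of palindromes whose normal form has full weight-$2$ commutator part to involve palindromes of at least $n-1$ distinct types; combined with a parity argument on the $y$-exponents (each type then having to occur an even, hence positive and even, number of times to kill the $y$-part of a suitably chosen element), this yields the lower bound $2(n-1)$ for $\pw(\widetilde{\rm N}_{n,2})$. So the genuine obstacle is not in the lemma itself but in the prior identification of which $z_{kl}$ occur in each palindrome type $p_j$, which the star description packages in a form convenient for the counting argument.
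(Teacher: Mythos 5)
Your proof is correct and takes essentially the same route as the paper: the ``star at vertex $j$'' description of $b(p_j)$ is exactly the paper's observation that each $z_{kl}$ occurs precisely in the normal forms $p_k$ and $p_l$, from which all three parts follow at once (and your reading of $b(p_1,\ldots,p_n)-b(p_i)$ as $\bigcup_{k\neq i}b(p_k)$ is the intended one). The graph-theoretic packaging is only a change of vocabulary, not of argument.
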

\begin{proof}
(1) follows from the fact that any basic commutator $z_{kl}, ~ 1 \leq l < k \leq n$ appeared in $p_j$, $1 \leq j \leq n$.

(2) Note that any commutator $z_{kl}$ is contained in the normal forms  $p_k$ and $p_l$. Hence if we remove $b(p_i)$ from $b(p_1, \ldots, p_n)$, then we will have all commutators of $Bas_n$.

(3) Note that $b(p_1, \ldots, p_n) - (b(p_i) \cup b(p_j))$ does not contain $z_{ji}$.
\end{proof}
\begin{lemma}\label{lem2}
The element $g=\prod_{1 \leq l < k \leq n} ~ z_{kl}$ in $\widetilde{\rm N}_{n, 2}$ can not be written as a product of less
than $2(n-1)$ palindromes.
\end{lemma}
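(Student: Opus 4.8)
The plan is to argue by contradiction: assume $g$ is a product of at most $2n-3$ palindromes and derive a contradiction. I would take a shortest such product $g=q_1q_2\cdots q_m$, so that $m\le 2n-3$ and, by minimality, no $q_t$ equals $1$. By \propref{pnf} (read in $\n_{n,2}$) every palindrome of $\n_{n,2}$ is one of the normal forms $p_1,\dots,p_n$; say $q_t$ is of type $p_{j_t}$. Inspecting those forms shows that a palindrome of type $p_j$ is trivial unless its $y_j$-exponent is odd, in which case it equals $y_j\cdot c_t$ with $c_t$ in the commutator subgroup --- hence central of exponent $2$ --- and every basic commutator occurring in $c_t$ lying in $b(p_{j_t})=\{z_{ab}\mid j_t\in\{a,b\}\}$.

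Next I would put the product $q_1\cdots q_m$ into normal form and read off two invariants: its image in the abelianization $\n_{n,1}$ and its set of basic commutators. Since each $c_t$ is central of exponent $2$, and since collecting the word $y_{j_1}y_{j_2}\cdots y_{j_m}$ into $\prod_i y_i^{\gamma_i}$ produces only commutators $z_{ab}$ with $a,b\in\{j_1,\dots,j_m\}$, one gets
$$q_1\cdots q_m=\Big(\prod_{i=1}^n y_i^{\gamma_i}\Big)\cdot w,\qquad \gamma_i\equiv\#\{t:j_t=i\}\pmod 2,$$
together with $b(q_1\cdots q_m)\subseteq b(q_1,\dots,q_m)=\bigcup_t b(p_{j_t})$: a basic commutator surviving in $w$ either occurs in some $c_t$, or is a collection commutator $z_{ab}$ with $a\in\{j_1,\dots,j_m\}$ and hence lies in $b(p_a)$, while collapsing the product can only cancel commutators. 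Comparing with $g=\prod_{1\le l<k\le n}z_{kl}$, which is already in normal form: its abelianization is trivial, so $\gamma_i=0$ for every $i$, i.e. \emph{each index $i$ occurs an even number of times among $j_1,\dots,j_m$}; and $b(g)=Bas_n$, so $Bas_n\subseteq b(q_1,\dots,q_m)$, whence $b(q_1,\dots,q_m)=Bas_n$.

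Finally I would count. By \lemref{lem1}(3), a union of the sets $b(p_k)$ taken over a family of indices that omits two of $1,\dots,n$ is never all of $Bas_n$ --- it misses the corresponding $z_{ji}$ --- so $\{j_1,\dots,j_m\}$ must contain at least $n-1$ distinct indices. Each such index occurs an even, and nonzero, hence at least a $2$-fold, number of times among $j_1,\dots,j_m$; therefore $m=\sum_i\#\{t:j_t=i\}\ge 2(n-1)$, contradicting $m\le 2n-3$. Together with \lemref{n3} this yields $\l(g)=2(n-1)$, hence $\pw(\n_{n,2},Y)=2(n-1)$.

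I expect the delicate point to be the inclusion $b(q_1\cdots q_m)\subseteq b(q_1,\dots,q_m)$ in the second step --- that re-collecting the $y$-part introduces no basic commutator outside $\bigcup_t b(p_{j_t})$. The resolution is that a collection commutator $z_{ab}$ can appear only when both $y_a$ and $y_b$ actually occur among the factors $q_t$, so that, say, $a$ is one of the $j_t$, and then $z_{ab}\in b(p_a)$; because the commutator subgroup of $\n_{n,2}$ has exponent $2$, the signs produced by the collection rules are immaterial. The remainder is the parity constraint $\gamma_i\equiv 0$ together with the covering constraint furnished by \lemref{lem1}.
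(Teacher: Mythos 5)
Your proof is correct and follows essentially the same route as the paper: the normal forms of palindromes in $\n_{n,2}$, the sets $b(\cdot)$ together with \lemref{lem1}(3) to force at least $n-1$ distinct types, and the parity of the $y_i$-exponents coming from $g\in \n_{n,2}'$. The only difference is organizational --- the paper runs a pigeonhole argument (with fewer than $2(n-1)$ factors some type occurs exactly once, so the product has nontrivial abelianization), whereas you force every occurring type to have even, hence at least $2$-fold, multiplicity and count $m\ge 2(n-1)$; these are equivalent, and your explicit check that collecting the $y$-parts creates no basic commutators outside $\bigcup_t b(p_{j_t})$ makes precise a point the paper leaves implicit.
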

\begin{proof}
We see that $b(g)=Bas_n$. Hence to represent $g$ as a product of palindromes, we must take at least $n-1$ different types of palindromes. Suppose that $g=q_1 \ldots q_s, ~ n-1 \leq s<2(n-1)$ is a product of $s$ palindromes. Since at least $n-1$ types of palindromes are included in this product and $s<2(n-1)$, there is a palindrome of some type  that appears only one time in the product. Without loss of generality, we can assume that it is the palindrome of type $p_1$, say, 
$$q_1=y_1 c, ~ c \in \widetilde{\rm N}_{n, 2}'.$$
Then the product $g=q_1, \ldots, q_s$ does not lie in the commutator subgroup $\widetilde{\rm N}_{n, 2}'$,
since in the normal form it contains element $y_1$. This is a contradiction.  Hence $g$ can not be written as a product of less that
$2(n-1)$ palindromes.
\end{proof}

\subsubsection{Proof of the second part of \thmref{nn2}}
\begin{proof}
We have already proved earlier that $\pw(\widetilde{\rm N}_{n, 2}) \leq 2(n-1)$. It follows from \lemref{lem2}
that there exists at least one element in $\widetilde{\rm N}_{n, 2}$ whose palindromic length is at least
$2(n-1)$. Thus $\pw(\widetilde{\rm N}_{n, 2}) \geq 2(n-1)$.\end{proof}
\subsubsection*{Proof of \corref{cor1}}
\begin{proof}
There is an onto homomorphism from $\N_{n,2}$ onto $\widetilde{\rm N}_{n, 2}$. Hence
$\pw(\widetilde{\rm N}_{n, 2}) \leq \pw(\N_{n,2})$ by \lemref{onto}. The corollary now follows from \lemref{c1}
and  \cite[Theorem 1.1]{BG}.
\end{proof}

\section{Proof of Proposition \ref{n:32}}\label{n32}

It follows from Proposition \ref{pnf} that palindromes in $\widetilde{\rm N}_{3,2}$ are of the following form:
\begin{equation}\label{eq1}
 p_{(\alpha_0, 2 \alpha_1, 2 \alpha_2)} = y_1^{\alpha_0} z_{21}^{\alpha_0 \alpha_1}   z_{31}^{\alpha_0 \alpha_2},
\end{equation}
\begin{equation}\label{eq2}
 p_{(2\alpha_1, \alpha_0, 2 \alpha_2)}  =  y_2^{\alpha_0}
 z_{21}^{\alpha_0 \alpha_1}z_{32}^{\alpha_0 \alpha_2},
\end{equation}
\begin{equation}\label{eq3}
 p_{(2 \alpha_1, 2 \alpha_2, \alpha_0)} =  y_3^{\alpha_0}
z_{31}^{\alpha_0 \alpha_1} z_{32}^{\alpha_0 \alpha_2}.
\end{equation}

\medskip In the following, for simplicity, we denote  the palindromes of the form \eqnref{eq1}, \eqnref{eq2}
and \eqnref{eq3} by $p_1$, $p_2$ and $p_3$ respectively forgetting the subscripts. When we write a product,
for eg. $p_1 p_1 p_1$, it should be understood that each $p_1$ is a palindrome of the type $\eqnref{eq1}$
but not necessarily with the same subscript unless it is mentioned otherwise. The rest of this Section will
be devoted to the proof of Proposition \ref{n:32}.

From Lemma \ref{lem2} follows lemma.
\begin{lemma}\label{z} {\it For $1 \leq j < i \leq 3$ let $z_{ij}=[y_i, y_j]$ in $\widetilde{\rm N}_{3,2}$.
The element $g=z_{21} z_{31} z_{32}$ in $\widetilde{\rm N}_{3,2}$ has palindromic length $l_{\mathcal{P}}(g)$
is equal $4$.}
\end{lemma}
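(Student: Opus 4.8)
The plan is to prove the statement $\l(z_{21}z_{31}z_{32}) = 4$ in $\widetilde{\rm N}_{3,2}$ in two halves: first the upper bound $\l(z_{21}z_{31}z_{32}) \leq 4$, then the matching lower bound $\l(z_{21}z_{31}z_{32}) \geq 4$. The lower bound is essentially already done: by \lemref{lem2} (with $n=3$), the element $g = z_{21}z_{31}z_{32} = \prod_{1\le l<k\le 3} z_{kl}$ cannot be written as a product of fewer than $2(n-1) = 4$ palindromes, so $\l(g) \geq 4$. Thus the only work is the upper bound, and the whole content of the statement is that $g$ \emph{can} be written as a product of exactly $4$ palindromes (equivalently, that the lower bound is sharp for this particular element).

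For the upper bound I would write $g = z_{21}z_{31}z_{32}$ explicitly as a product of four palindromes taken from the three families \eqnref{eq1}, \eqnref{eq2}, \eqnref{eq3}. Since $\widetilde{\rm N}_{3,2}'$ is central and elementary abelian of rank $3$, generated by $z_{21}, z_{31}, z_{32}$ with each $z_{ij}^2 = 1$, and since a product of palindromes $p_{i_1}\cdots p_{i_4}$ whose $y$-parts cancel lands in $\widetilde{\rm N}_{3,2}'$, the search reduces to linear algebra over $\mathbb{F}_2$. A natural ansatz is to use two palindromes of one type, say $p_1 p_1$, whose product is $y_1^{\alpha_0 + \alpha_0'} z_{21}^{\alpha_0\alpha_1 + \alpha_0'\alpha_1'} z_{31}^{\alpha_0\alpha_2 + \alpha_0'\alpha_2'}$; choosing exponents so the $y_1$-part vanishes and the commutator part equals $z_{21}$ or $z_{21}z_{31}$ etc., one then supplies the remaining commutators with one $p_2$ and one $p_3$ (or a symmetric choice). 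Concretely one can check that a product of the shape $p_2 \cdot p_3 \cdot p_1 \cdot p_1$ (or $p_1 p_2 p_3 p_i$) realizes $z_{21}z_{31}z_{32}$ after a short $\mathbb{F}_2$-computation; I would exhibit the explicit exponent vectors and verify the product collapses to $g$ using the multiplication rule and centrality of the commutators.

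The main obstacle is purely bookkeeping: organizing the $\mathbb{F}_2$ case analysis so that one genuinely lands on $z_{21}z_{31}z_{32}$ and not some nearby element, and making sure the $y$-parts cancel to the identity rather than merely to something in $\langle y_1^2, \ldots\rangle$ — but since $y_i^2 = 1$ in $\widetilde{\rm N}_{3,2}$, even this is automatic once the exponent sums are chosen to be $0 \pmod 2$ in the abelianization. There is no deep difficulty; the proposition's interest lies in the lower bound, which is inherited from \lemref{lem2}, combined with the observation that four palindromes actually suffice for this element. I would therefore keep the proof short: cite \lemref{lem2} for $\l(g)\geq 4$, then display one explicit factorization of $g$ into four palindromes of types \eqnref{eq1}--\eqnref{eq3} and verify it by direct multiplication, concluding $\l(g) = 4$.
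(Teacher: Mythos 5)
Your overall strategy coincides with the paper's: the lower bound $\l(g)\geq 4$ is quoted from \lemref{lem2}, and the upper bound is to be established by exhibiting an explicit product of four palindromes. However, the specific candidate shapes you name, $p_2\cdot p_3\cdot p_1\cdot p_1$ and $p_1p_2p_3p_i$, cannot realize $g$, and the reason is precisely the parity obstruction that drives \lemref{lem2}. In the normal forms \eqnref{eq1}--\eqnref{eq3} every commutator exponent of a type-$j$ palindrome is a multiple of the exponent $\alpha_0$ of $y_j$; consequently a type-$j$ palindrome with trivial image in the abelianization is itself the trivial element. In a product where types $2$ and $3$ each occur exactly once, landing in $\widetilde{\rm N}_{3,2}'$ forces those two factors to be trivial, and what survives is a product $p_1p_1$, which only produces words in $z_{21},z_{31}$ and can never contribute $z_{32}$. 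So your proposed witnesses are exactly the shapes your own lower-bound argument rules out. The ansatz that works uses two types, each occurring twice, for instance
$$
g=\bigl(y_2[y_3,y_2]\bigr)\bigl(y_2[y_2,y_1]\bigr)\bigl(y_1[y_3,y_1]\bigr)\bigl(y_1\bigr)
=(y_2y_3y_2y_3y_2)(y_1y_2y_1)(y_1y_3y_1y_3y_1)(y_1)=p_2p_2p_1p_1,
$$
which (using $y_i^2=1$ and centrality of the $z_{kl}$) collapses to $z_{32}z_{21}z_{31}=g$; this is the factorization the paper gives. Since you say you would run the $\mathbb{F}_2$ search and verify, you would presumably discover this, but as written the proof is incomplete until a correct witness replaces the displayed candidates.
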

\begin{proof}
From Lemma \ref{lem2} follows that $l_{\mathcal{P}}(g)\geq 4$.

On the other hand, note that
\begin{eqnarray*}
g&=& [y_2, y_1] [y_3, y_1] [y_3, y_2] \\
&=& [y_3, y_2] [y_2, y_1] [y_3, y_1] \\
&=& y_2[y_3, y_2]   y_2 [y_2, y_1]  y_1 [y_3, y_1]  y_1\\
&=& p_2 p_2 p_1 p_1
\end{eqnarray*}
Thus $g$ can be expressed as a product of four  palindromes.
\end{proof}

%\subsection*{Proof of \thmref{n:32}}

%\begin{proof}
Note that any element $w$ of $\widetilde{\rm N}_{3,2}$ has the form
$$w=y_1^{a_1} \, y_2^{a_2} \, y_3^{a_3} z_{21}^{b_{1}} z_{31}^{b_{2}} z_{32}^{b_{3}},
$$
where, for $i=1, 2, 3$, $a_i$, $b_i \in \{0, 1\}$. Define
$$|w|=\sum_{i=1}^3 (a_i + b_i).$$
If $|w|=1$ then $\l(w) \leq 2$, since, any commutator  $z_{ij}$  is a product of two palindromes.

\medskip Let $|w|=2$, then we have 15 possibilities for $(a_1, a_2, a_3, b_1, b_2, b_3)$, where each of the $a_i$ and $b_i$ is either $0$ or $1$. For simplicity of notation we identify  the 6-tuple   $(a_1, a_2, a_3, b_1, b_2, b_3)$ with the binary word $a_1a_2 a_3 b_1 b_2 b_3$ and write  down the 15 possibilities below:

\medskip

\noindent 110000, 101000, 100100, 100010, 100001, 011000, 010100, 010010, 010001, 001100, 001010, 001001, 000110, 000101, 000011.

\medskip
 In the first twelve cases we have a product of two generators or a product of one generator and a commutator. The palindromic length of this product is $\leq 3$. In the last three cases we have:
$$000110: \ w=z_{21} z_{31} = y_2 y_1 y_2 \cdot y_3 y_1 y_3 \cdot y_1.$$
$$000101: \ w = z_{21} z_{32} = z_{32} z_{21} = y_3 y_2 y_3 \cdot y_1 y_2 y_1.$$
$$000011: \ w= z_{31} z_{32} = y_3 y_1 y_3 y_1 y_3 \cdot y_2 y_3 y_2.$$
Thus in each cases $w$ is a product of at most three palindromes.

\medskip Let $|w|=3$, then we have $\begin{pmatrix} 6 \\ 3 \end{pmatrix}=20$ possibilities:

\medskip \noindent 111000, 110100, 110010, 110001, 101100, 101010, 101001, 100110 100101, 100011, 011100, 011010, 011001, 010110, 010101, 010011, 001110, 001101, 001011, 000111.

\medskip \noindent After rearranging terms and simplification we get:
$$110010: \ w = y_1 y_2 z_{31} = z_{31} y_1 y_2= y_3 y_2 y_3 \cdot  y_2.$$
$$110001: \ w= y_1 y_2 z_{32} = y_1 z_{32} y_2 = y_1 \cdot y_3 y_2 y_3; \ \
101100: \ w = z_{21} y_1 y_3= y_2 y_1 y_2 \cdot y_3.$$
$$ 101010: \ w = y_1 y_3 z_{31} = y_3 y_1; \ \ 101001: \ w= y_1 y_3 z_{32}= y_1 \cdot y_2 y_3 y_2.$$
$$100110: \ w=y_1 z_{21} z_{31} = z_{21} y_1 z_{31}= y_2 y_1 y_2 y_1 \cdot y_1 \cdot y_3 y_1 y_3 y_1
= y_2 y_1 y_2 \cdot y_3 y_1 y_3 \cdot y_1.$$
$$100101: \ w =y_1 z_{21} z_{32} =z_ {32} z_{21} y_1=y_3 y_2 y_3 \cdot y_1 \cdot y_2. $$
$$ 100011: \ w=y_1 z_{31} z_{32}=z_{31} y_1 z_{32} = y_3 y_1 \cdot y_2 y_3 y_2.$$
$$010110: \ w= y_2 z_{21} z_{31} = y_1 y_2 y_1 \cdot y_3 y_1 y_3 \cdot y_1.  \ \
010101: \ w = y_2 z_{21} z_{32} = y_1 y_2 y_1 \cdot y_3 y_2 y_3 \cdot y_2. $$
$$010011:  \ w= z_{31} z_{32}y_2 = y_3y_1y_3 \cdot y_1 \cdot y_3 y_2 y_3. \ \ \
001110: \ w =z_{21} y_3 z_{31}=y_2 y_1 y_2 \cdot y_3 y_1. $$
$$001101: \ w=z_{21} y_3 z_{32}=y_2 y_1 y_2 \cdot y_1 \cdot y_2 y_3 y_2. \ \
001011: \ w =y_3 z_{31} z_{32} = y_1 y_3 y_1 \cdot y_3 y_2 y_3 \cdot y_2.$$
Thus we see that in each of the above cases, $w$ is a product of at most three palindromes.
Finally  $000111: \ w=z_{21} z_{31} z_{32}$ is a product of four palindromes as we have seen in Lemma \ref{z}.

\medskip Let $|w|=4$. Then we have $\begin{pmatrix} 6 \\ 4 \end{pmatrix}=15$ possibilities:

\medskip \noindent 111100, 111010, 110110, 101110, 011110, 111001, 110101, 101101, 011101, 110011, 101011, 011011, 100111, 010111, 001111.

\medskip \noindent We have after rearranging terms and simplification,
$$111100: \ w= y_1 y_2 y_3 z_{21}=y_1 y_2 z_{21} y_3 = y_1 y_2  y_2 y_1 y_2 y_1 y_3=y_2 y_1 y_3.$$
$$110110: \ w=y_1  z_{21} z_{31} = z_{31} y_1 y_2 z_{21} = y_3 y_1 y_3 \cdot y_1 y_2 y_1.$$
$$ 101110: \ w=y_1 y_3 z_{21} z_{31} =z_{21} y_1 y_3 z_{31} =  y_2 y_1 y_2 \cdot y_1 y_3 y_1.$$
$$011110: \ w= y_2  z_{21} y_3 z_{31} = y_1 (y_2 y_3) y_1;  \ \ 111001: \ w= y_1 y_2 y_3 z_{32} = y_1 y_3 y_2.$$
$$110101: \ w=y_1 y_2 z_{21} z_{32} = y_2 y_1 \cdot y_3 y_2 y_3 y_2= y_2 y_1 y_2 \cdot y_2 y_3 y_2 y_3 y_2.$$
$$  101101: \ w=z_{21} y_1 y_3 z_{32} = y_2 y_1 y_2 \cdot y_2 y_3 y_2.$$
$$011101: \ w = y_2 z_{21} y_3 z_{32} = y_1 y_2 y_1 \cdot y_2 y_3 y_2; \ \ 110011: \ w= z_{31} y_1 z_{32} y_2= y_3 y_1 y_2 y_3=y_3y_1y_3 \cdot y_3 y_2 y_3.$$
$$101011: \ w=z_{31} y_1 y_3 z_{32} = y_3 y_1 y_3 \cdot y_2 y_3 y_2; $$
$$011011:  \ w=z_{31} y_1 z_{32} z_{21} = y_3 y_1 y_3 \cdot y_3 y_2 y_3 \cdot y_1 y_2 y_1.$$
$$010111:  \ w=y_2 z_{21} z_{31} z_{32} = y_1 y_ 2 y_1 \cdot y_3 y_1 y_3 y_1 y_3 \cdot y_2 y_3 y_2.$$
$$ 001111: \ w=y_3 z_{21} z_{31} z_{32}=z_{21} y_3 z_{31} z_{32} = y_2 y_1 y_2 \cdot y_3 y_1 y_3 \cdot y_2 y_3 y_2.$$
Thus we see that in each of the above cases $w$ is a product of at most three palindromes.

\medskip Let $|w|=5$. There are  six possibilities and after rearranging terms and simplification we have:
$$111110: \ w=y_1 y_2 y_3 z_{21} z_{31}=y_1 y_2 z_{21} y_3 z_{31} = y_2 y_3 y_1.$$
$$111101: \ w=y_1 y_2 y_3 z_{21} z_{32} = y_1 y_2 z_{21} y_3 z_{32} = y_2 y_1 y_2 \cdot y_3 y_2.$$
$$111011: \ w=y_1 y_2 y_3 z_{31} z_{32}= y_1 z_{31} y_2 y_3 z_{32} = y_1 y_3 y_1 \cdot y_3 y_1 y_3 \cdot y_2.$$
$$110111: \ w=y_1 y_2 z_{21} z_{31} z_{32} = y_2 y_1 z_{31} z_{32}= z_{32} y_2 y_1 z_{31}= y_3 y_2 y_3 \cdot y_1 y_3 y_1 y_3 y_1.$$
$$101111: \ w = y_1 y_3 z_{21} z_{31} z_{32} = y_1 y_3 z_{31} z_{32} z_{21} = y_3 y_1 z_{32} z_{21} = y_3 z_{32} z_{21} y_1 = y_2 y_3 y_2\cdot y_2 y_1 y_2.$$
$$011111: \ w = y_2 y_3 z_{21} z_{31} z_{32} = z_{21} z_{31} y_2 y_3 z_{32} = y_3 y_1 x_3x_1 y_3 \cdot y_1 y_2 y_1. $$
Thus $w$ is a product of at most three palindromes.

\medskip Let $|g|=6$. Then the only possibility is $111111$ and we have
$$w=y_1 y_2 y_3 z_{21} z_{31} z_{32} = y_1 y_2 z_{21} y_3 z_{31} z_{32} = y_2 \cdot y_3 y_1 y_3 \cdot y_2 y_3 y_2.$$

\medskip Thus we have shown that all but $g=z_{21} z_{31} z_{32}$ in $\widetilde{\rm N}_{3,2}$ can be
written
as a product of at most three palindromes.  From \lemref{z} it follows that the element $g$ is the only
element whose palindromic length is  $4$.
This proves Proposition \ref{n:32}.
%\end{proof}

\section{Palindromic Width of Some Abelian-By-Nilpotent Groups}\label{abns}

\subsection{Palindromic Width of a Metabelian Group} \label{ma}

In \cite{BG}, we proved that if $AN_n$ is a free abelian-by-nilpotent group with basis $X=\{x_1, \ldots, x_n\}$,
 then $n \leq \pw(AN_n, X) \leq 5n$. To prove this we used the following representation of elements of $AN_n$
 that follows from \cite[Theorem 2]{MR}.
\begin{theorem}\label{mr1}
Let $AN_n=\langle x_1, \ldots, x_n \rangle$ be a non-abelian free abelian-by-nilpotent group of rank $n$.
Let $A$ be an abelian normal subgroup of $AN_n$ such that $AN_n/A$ is nilpotent. Then every element
$g \in AN_n$ can be expressed as:
$$g=x_1^{\alpha_1} \ldots x_n^{\alpha_n} [u_1, x_1]^{a_1} [u_2, x_2]^{a_2} \ldots [u_n, x_n]^{a_n}$$
for $u_1, \ldots, u_n \in AN_n$ and $a_1, \ldots, a_n \in A$, $\alpha_1, \ldots, \alpha_n \in \Z$.
\end{theorem}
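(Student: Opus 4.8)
The plan is to strip $g$ apart along the normal series $1 \le A \le AN_n$, whose bottom layer $A$ is abelian and whose top quotient $Q = AN_n/A$ is free nilpotent, treating the two layers separately and then reconciling them. First I would reduce to the commutator subgroup. Since $AN_n$ is relatively free of rank $n$, its abelianization $AN_n/AN_n'$ is free abelian on the images of $x_1,\ldots,x_n$, so there are unique integers $\alpha_1,\ldots,\alpha_n$ with $g \equiv x_1^{\alpha_1}\cdots x_n^{\alpha_n} \pmod{AN_n'}$. Writing $c = (x_1^{\alpha_1}\cdots x_n^{\alpha_n})^{-1}g \in AN_n'$, the theorem reduces to showing that every element of $AN_n'$ has the form $[u_1,x_1]^{a_1}\cdots[u_n,x_n]^{a_n}$ with $u_i \in AN_n$ and $a_i \in A$.

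Next I would treat the nilpotent layer. The image $\bar c$ of $c$ in the free nilpotent group $Q$ lies in $Q'$, so \lemref{ar1}, applied to $Q$, gives $\bar c = [\bar u_1,\bar x_1]\cdots[\bar u_n,\bar x_n]$ for suitable $\bar u_i \in Q$. Lifting each $\bar u_i$ to some $u_i \in AN_n$ yields $c = [u_1,x_1]\cdots[u_n,x_n]\cdot a$, where the residue $a$ necessarily lies in $A \cap AN_n'$: it is trivial in $Q$, hence lies in $A$, and it is a product of elements of $AN_n'$. Everything then comes down to absorbing the abelian tail $a$ into the $n$ commutators while respecting the index pattern $[\,\cdot\,,x_i]$.

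The main obstacle is this last absorption, and here I would switch to a module-theoretic description. As $A$ is abelian and normal, conjugation makes it a module over the group ring $\Z[Q]$, and for $v \in A$ one has $[v,x_i] = v^{-1}v^{x_i}$, i.e.\ multiplication by $\bar x_i - 1$ in the augmentation ideal $\mathfrak a = (\bar x_1 - 1,\ldots,\bar x_n-1)$; in these terms \lemref{reh} says precisely that $[A,AN_n] = A\mathfrak a$, every element of which is a product $[v_1,x_1]\cdots[v_n,x_n]$ with $v_i \in A$. The subtlety is that $a$ lies in $A \cap AN_n'$, which is strictly larger than $[A,AN_n]$ in general (already in the metabelian case $A \cap AN_n' = AN_n'$ while $[A,AN_n]=\gamma_3$), so one cannot simply quote \lemref{reh}. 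The slack is measured by the Fox-derivative fundamental identity $\sum_i (\bar x_i - 1)\,\partial_i c = 0$ satisfied by every element of $AN_n'$. I would realize the admissible derivative profiles by refining the choice of the $u_i$ from the previous step and, crucially, by inserting the conjugating factors $a_i$: since $[u_i,x_i]^{a_i} = [u_i,x_i]\,[[u_i,x_i],a_i]$ with the correction $[[u_i,x_i],a_i] = a_i(1-\overline{[u_i,x_i]})$ running through a prescribed submodule of $A\mathfrak a$, the conjugations supply exactly the extra freedom needed to merge the same-index commutators and clear the tail $a$. Verifying that the reachable module elements close up against the fundamental identity — that the $a_i$ can always be chosen to eliminate $a$ — is the technical heart of the argument and the step I expect to be hardest; the first two stages are essentially formal.
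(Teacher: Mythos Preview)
The paper does not prove this theorem; it is quoted as following from \cite[Theorem~2]{MR} and then used as a black box in \secref{abns}. There is thus no in-paper argument to compare your proposal against.

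Your outline is the natural strategy and presumably close to what \cite{MR} actually does: strip off the abelianization, handle the nilpotent quotient $Q = AN_n/A$ by writing $\bar c$ as $\prod_i[\bar u_i,\bar x_i]$, then absorb the leftover $a \in A \cap AN_n'$ via the conjugators $a_i$. Two remarks. First, \lemref{ar1} as stated in the paper is only for free nilpotent groups, whereas $Q$ is merely nilpotent for an arbitrary admissible $A$; you need the general form obtained by iterating \lemref{reh} down the lower central series of $Q$, not \lemref{ar1} verbatim. Second, you rightly flag the absorption of $a$ as the hard step and leave it as a heuristic: the Fox-derivative/module picture is suggestive, but until you actually produce, for a given $a \in A \cap AN_n'$, the elements $a_i$ (and possibly adjusted $u_i$) that kill the tail, this remains a plan rather than a proof---and that construction is precisely the substance of \cite{MR}.
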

 Evidently, every metabelian group is an abelian-by-nilpotent group. However, for finitely generated metabelian groups, this provides a better upper bound. 
\subsubsection{Proof of \thmref{math}}

\begin{proof}
Let $h \in  \gamma_2(M_n)$. Using the fact that $\gamma_2(M_n)$ is abelian, we let in
 \thmref{mr1} $A = \gamma_2(M_n)$. Hence $h$ has the form
$$h=[u_1, x_1]\ldots[u_n, x_n]~\hbox{ for}~ u_1, \ldots, u_n \in  \gamma_2(F_n(\mathcal U^2)). $$
Hence every $g \in  M_n$ has the form:
$$g=x_1^{\alpha_1} x_2^{\alpha_2} \ldots x_n^{\alpha_n}[u_1, x_1]\ldots[u_n, x_n].$$
Observe that
$$x_n^{\alpha_n}[u_n, x_n]=[u_n, x_n]^{x_n^{-\alpha_n} } x_n^{\alpha_n}=[v_n, x_n]x_n^{\alpha_n}$$
 for $v_n = x_n^{\alpha_n} u_n x_n^{-\alpha_n}$. So, $g$ can be written as
$$g=x_1^{\alpha_1} x_2 ^{\alpha_2} \ldots x_{n-1}^{\alpha_{n-1}} [v_n, x_n]x_n^{\alpha_n} [u_1, x_1]\ldots[u_{n-1}, x_{n-1}].$$
Now it follows from \lemref{l:32}(3) that
$$\pw(M_n, X) \leq (n-1)+3+3(n-1)=4n-1.$$
This proves the result.
\end{proof}

\subsection{Palindromic Width of $\widetilde{AN}_n$}
\begin{lemma}\label{cmm}
Let $G$ be a group which is generated by the set of involutions $Y = \{ y_1, y_2, \ldots, y_n \}$.
Let $g, h$ be any element in $G$ and $p$ be a palindrome in $G$. Then the following hold
\begin{enumerate}
\item{Any commutator of the type  $[g, p]$  is
a product of $2$ palindromes.
Any element $[g, y] y^{\alpha}$, $y \in Y$, $\alpha \in
\{0, 1\}$ is a product of $2$ palindromes.}

\item{Any commutator of the type  $[g, y]^h$  is
a product of $2$ palindromes.}
\end{enumerate}
\end{lemma}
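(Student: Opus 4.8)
The plan is to reduce everything to the single observation that in a group generated by involutions, the inverse of a generator is the generator itself, so that for any $g$ the word $g^{-1}$ read backwards is a palindrome precisely when $g$ is. More useful here: for a palindrome $p$ one has $\bar p = p^{-1}$ in $G$, where $\bar p$ denotes the reverse word; this is because if $p = a_1 \cdots a_k$ with each $a_i \in Y$ (and $Y = Y^{-1}$ since each $y_i$ is an involution), then $\bar p = a_k \cdots a_1 = a_k^{-1} \cdots a_1^{-1} = p^{-1}$. First I would record this fact, and also record that any product of two palindromes is again representable as such and that a single generator $y \in Y$ is itself a palindrome.

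For part (1), consider $[g, p] = g^{-1} p^{-1} g p$. Write $q_1 = g^{-1} p^{-1} g$; this is conjugate to the palindrome $p^{-1}$ (which is a palindrome by \lemref{l:32}(1), or directly since $\bar{p^{-1}} = \bar p^{\,-1} = p = (p^{-1})^{-1}$). The key point is that a conjugate of a palindrome $r$ by $g$, namely $g^{-1} r g$, can itself be written as a product of two palindromes: take $p_1 = g^{-1} r \bar g^{-1}$ — wait, more carefully — write $g = c_1 \cdots c_m$ with $c_i \in Y$, so $g^{-1} = \bar g = c_m \cdots c_1$; then $g^{-1} r g = (\bar g \, r' \, g)$ where splitting $r$ is unnecessary: set $P_1 = \bar g \cdot r \cdot g$ reversed check — actually the clean statement is that $g^{-1} r g = (g^{-1} r g)$ and since $r$ is a palindrome, $g^{-1} r g$ reversed equals $\bar g \, r \, \overline{g^{-1}} = g^{-1} r g$ using $\bar g = g^{-1}$, so \emph{$g^{-1} r g$ is itself a palindrome} whenever $r$ is. Hence $[g,p] = (g^{-1} p^{-1} g)\cdot p$ is a product of the palindrome $g^{-1} p^{-1} g$ and the palindrome $p$: two palindromes. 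For $[g,y]y^{\alpha}$ with $y \in Y$ and $\alpha \in \{0,1\}$: if $\alpha = 0$ this is $[g,y]$, covered above with $p = y$; if $\alpha = 1$, then $[g,y]y = g^{-1} y^{-1} g y \cdot y = g^{-1} y^{-1} g = g^{-1} y g$, which is a single palindrome by the same reversal computation, hence trivially a product of two.

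For part (2), $[g,y]^h = h^{-1} g^{-1} y g y h$. Since $[g,y] = (g^{-1}yg)\cdot y$ is a product of two palindromes $s_1 = g^{-1} y g$ and $s_2 = y$, conjugation gives $[g,y]^h = (h^{-1} s_1 h)(h^{-1} s_2 h)$, and each factor $h^{-1} s_i h$ is again a palindrome by the reversal argument applied to $s_i$ (note $\bar h = h^{-1}$). So $[g,y]^h$ is a product of two palindromes. The main — indeed only — obstacle is making the reversal argument airtight: one must be careful that $\bar g = g^{-1}$ genuinely holds in $G$ for \emph{every} element $g$, which requires expressing $g$ as a word in $Y^{\pm 1} = Y$ and using $y_i^2 = 1$; once that is in hand, conjugation by any group element preserves the class of palindromes, and everything else is immediate. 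I would state and prove the preliminary claim ``for all $g \in G$ and every palindrome $p$, $g^{-1} p g$ is a palindrome'' as a one-line sublemma and then deduce (1) and (2) in two or three lines each.
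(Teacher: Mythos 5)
Your proposal is correct and follows essentially the same route as the paper: both rest on the observation that, since every generator is an involution, $\overline{g}=g^{-1}$ for all $g\in G$, so that $g^{-1}pg=\overline{g}\,p\,g$ is again a palindrome and hence $[g,p]=(\overline{g}\,\overline{p}\,g)\cdot p$, $[g,y]y^{\alpha}=\overline{g}\,y\,g\,y^{1+\alpha}$, and $[g,y]^{h}=(\overline{h}\,\overline{g}\,y\,g\,h)\cdot(\overline{h}\,y\,h)$ give the required decompositions. Your explicit sublemma that conjugation preserves palindromes is exactly the mechanism the paper uses implicitly.
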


\begin{proof}
(1) See that
$$
[g,  p]={g}^{-1} { p}^{-1}  g  p=  \overline{g} \, \overline{p} \, g \cdot p
$$
is a product of palindromes $\overline{g} \, \overline{p} \,  g$ and $p$.

Similarly,
$$
[g, y] y^{\alpha}=\overline{g} y g y^{1+\alpha},
$$
which is a palindrome if $\alpha = 1$ or a product of two palindromes if $\alpha = 0$.

(2) We have
$$
[g, y]^h = h^{-1} g^{-1} y g y h = \overline{h} \overline{g} y g h \cdot \overline{h} y h
$$
is a product of 2 palindromes.
\end{proof}
\subsubsection{Proof of \thmref{abn}}
\begin{proof}
$(i)$ We have $\widetilde{\N}_{n,1}=\Z_2 \times \ldots \times  \Z_2$ ($n$-times). We see that there is a homomorphism
$\widetilde{AN}_n \to \widetilde{\N}_{n,1}$. Now the left-side of the inequality follows from the fact
$\pw(\widetilde{\N}_{n,1})=n$, see \cite{BG} for a proof of this fact. To prove the right-hand side inequality, write any element $g \in \widetilde{AN}_n$ in the form
$$
g=y_1^{\epsilon_1} \ldots y_n^{\epsilon_n} [g_1, y_1]^{a_1} [g_2, y_2]^{a_2} \ldots [g_n, y_n]^{a_n},
$$
where for $i=1, \ldots, n$, $\epsilon_i \in \{0, 1\}$, $g_i\in \widetilde{AN}_n$ and $a_i \in \widetilde{A}$; here  $\widetilde{A}$
is the image of $A$ under the homomorphism $AN_n \to \widetilde{AN}_n$.
Such a representation of $g$ follows from \thmref{mr1}.
By \lemref{cmm}, for $i=1, \ldots, n$,  $[g_i, y_i]^{a_i}$ is a product of 2 palindromes. Hence $g$ is a product of at most $n+2n=3n$ palindromes. This proves the first part of the theorem.

\medskip $ (ii)$  Using the fact that $\widetilde{M}_n'$ is abelian, we set in \thmref{mr1},  $A = \widetilde{M}_n'$. Since any
two commutators of $\widetilde{M}_n$ commute, hence every $g \in  \widetilde{M}_n$ has the form:
$$
g=y_1^{\varepsilon_1} y_2^{\varepsilon_2} \ldots y_n^{\varepsilon_n} [u_1, y_1] \ldots [u_n, y_n] =
y_1^{\varepsilon_1} y_2^{\varepsilon_2} \ldots y_n^{\varepsilon_n} [u_n, y_n] [u_1, y_1] \ldots
[u_{n-1}, y_{n-1}]
$$
for some $\epsilon_i \in \{0, 1\}$, $u_i\in \widetilde{M}_n$.
Observe that
$$
y_n^{\varepsilon_n} [u_n, y_n]=[u_n, y_n]^{y_n^{-\varepsilon_n} } y_n^{\varepsilon_n}=
[v_n, y_n]y_n^{\varepsilon_n}
$$
where $v_n=y_n^{\varepsilon_n} u_n y_n^{-\varepsilon_n}$.  So, $g$ can be written as
$$g=y_1^{\varepsilon_1} y_2 ^{\varepsilon_2} \ldots y_{n-1}^{\varepsilon_{n-1}} [v_n, y_n]y_n^{\varepsilon_n} [u_1, y_1]\ldots[u_{n-1}, y_{n-1}].$$
Now it follows from \lemref{cmm} that
$$\pw(\widetilde{M}_n, Y) \leq n-1+2+2(n-1)=3n-1.$$
This proves the result.
\end{proof}

\end{document}